\newtheorem{theorem}{Theorem}
\newtheorem{lemma}{Lemma}
\newtheorem{corollary}{Corollary}
\newtheorem{question}{Question}
\theoremstyle{remark}
\newtheorem{example}{Example}
\newcommand{\C}{\mathbb{C}}
\newcommand{\disk}{\mathbb{D}}
\newcommand{\ball}{\mathbb{B}}
\newcommand{\D}{\Omega}
\newcommand{\ep}{\varepsilon}
\newcommand{\supp}{\text{supp}}
\newcommand{\Dc}{\overline{\Omega}}
\newcommand{\dbar}{\overline{\partial}}
\newcommand{\sumprime}{\sideset{}{'}\sum}
\newcommand{\zb}{\overline{z}}
\title[Compactness of Toeplitz operators on pseudoconvex domains]{Compactness 
	of Toeplitz operators with continuous symbols on pseudoconvex domains in 
	$\mathbb{C}^n$}
\author{Tomas Miguel Rodriguez}
\author{S\"{o}nmez \c{S}ahuto\u{g}lu}
\address{University of Toledo, Department of
Mathematics \& Statistics, Toledo, OH 43606, USA}
\email{trodrig8@rockets.utoledo.edu, Sonmez.Sahutoglu@utoledo.edu}
\subjclass[2020]{Primary  47B35; Secondary 32A36}
\keywords{Toeplitz operator, pseudoconvex, compact, Bergman space}
\thanks{The material in this paper comes from the first author’s PhD thesis 
\cite{RodriguezThesis} written at University of Toledo under the supervision 
of the second author.}
\date{\today}
\begin{document}

\begin{abstract}
Let $\Omega$ be a bounded pseudoconvex domain in $\mathbb{C}^n$ with Lipschitz 
boundary and $\phi$ be a continuous function on $\overline{\Omega}$. We show that 
the Toeplitz operator $T_{\phi}$ with symbol $\phi$ is compact on the weighted Bergman 
space if and only if $\phi$ vanishes on the boundary of $\Omega$. We also show that 
compactness of the Toeplitz operator $T^{p,q}_{\phi}$ on $\overline{\partial}$-closed 
$(p,q)$-forms for $0\leq p\leq n$ and $q\geq 1$ is equivalent to $\phi=0$ on $\Omega$. 
\end{abstract}
\maketitle

The famous Axler-Zheng Theorem (\cite[Theorem 2.2]{AxlerZheng98}) 
characterizes compactness of a finite sum of finite products of Toeplitz operators on 
the Bergman space of the unit disc in terms of the behavior of the Berezin transform 
of the operator. Engli\v{s} in \cite{Englis99} extended this result to irreducible bounded 
symmetric domains. We note that the Axler-Zheng Theorem is also true on the polydisc 
as the irreducibility condition can be removed (see \cite[p. 232]{ChoeKooLee09}). 
Later on, the Axler-Zheng theorem  was generalized to the Toeplitz algebra on the 
unit ball by Su\'arez in \cite{Suarez07}. For more recent 
developments about Axler-Zheng type results, we direct the reader to 
\cite{CuckovicSahutoglu13,CucukovicSahutogluZeytunuc18,MitkovskiSuarezWick13} 
and the references therein. 

In this paper we are interested in understanding compactness of Toeplitz 
operators in terms of the behavior of their symbols on the boundary. We will restrict 
ourselves to symbols that are continuous up to the boundary. Namely, we are interested 
in the following question.

\begin{question}\label{Question1}
Let $\D$ be a bounded domain in $\C^n$ and $\phi\in C(\Dc)$. Assume that 
the Toeplitz operator $T_{\phi}$ is compact on the Bergman space $A^2(\D)$. 
Is $\phi=0$ on the boundary of $\D$?
\end{question} 

It is well known that, due to Montel's Theorem, the converse of Question \ref{Question1} 
is true (see Corollary \ref{CorMultToeplitz} below). Furthermore, in dimension one  
(that is $\D\subset \C$) the answer is yes when $\D$ is a $C^1$-smooth 
bounded domain (see, for instance, \cite[Theorem 2.3]{ArazyEnglis01}). 

In higher dimensions (that is when $\D\subset \C^n$ for $n\geq 2$), Arazy and Engli\v{s}  
\cite[Theorem 2.3]{ArazyEnglis01} showed that the answer to Question \ref{Question1} is yes 
when $\D$ is a $C^3$-smooth bounded strongly pseudoconvex domain. Le 
\cite[Theorem 2.5]{Le10Toeplitz} answered the question in the affirmative when $\D$ is 
the polydisc. \v{C}u\v{c}kovi\'c and \c{S}ahuto\u{g}lu in 
\cite{CuckovicSahutoglu13,CuckovicSahutoglu14Erratum} also answered the question in the 
affirmative when the domain is $C^{\infty}$-smooth bounded and pseudoconvex satisfying 
condition $R$ and the set of strongly pseudoconvex points is dense in the boundary. 
We note that one can remove condition $R$ by adopting the proof of 
\cite[Lemma 15]{CucukovicSahutogluZeytunuc18} using peak points. 
Recently, \v{C}u\v{c}kovi\'c, \c{S}ahuto\u{g}lu, and Zeytuncu 
\cite[Theorem 4]{CucukovicSahutogluZeytunuc18} extended the answer to weighted 
Bergman spaces on $C^2$-smooth bounded strictly pseudoconvex domains. 

\section{Main Results}
Before we state our first result, we introduce some basic notation. 
Let $\D$ be a domain in $\C^n$ and $h:\D\to (0,\infty)$ be a continuous function. 
Then 
\[A^2(\D,h)=\left\{f\in \mathcal{O}(\D): \|f\|_h^2=\int_{\D}|f|^2hdV<\infty\right\}\] 
denotes the weighted Bergman space on $\D$ with weight $h$. Here $\mathcal{O}(\D)$ 
and $dV$ denote the set of holomorphic functions and the Lebesgue measure 
on $\D$, respectively. 

Since $h$ is continuous, $A^2(\D,h)$ is a closed subspace of $L^2(\D,h)$ 
(see, for example, \cite[Theorem 3.1]{Pasternak-Winiarski90}). Then there exists 
an orthogonal projection 
\[P:L^2(\D,h)\to A^2(\D,h)\] 
called the Bergman projection. The Toeplitz operator is 
\[T_{\phi}=PM_{\phi}:A^2(\D,h)\to A^2(\D,h)\]  
with symbol $\phi\in L^{\infty}(\D)$, where 
\[M_{\phi}:L^2(\D,h)\to L^2(\D,h)\] 
is the multiplication operator by $\phi$. We note that 
\[H_{\phi}=(I-P)M_{\phi}:A^2(\D,h)\to L^2(\D,h)\] 
is called the Hankel operator with symbol $\phi$. In this paper, 
we will focus on the weighted spaces with weight $h=|\rho|^r$ 
where $\rho$ is a defining function and $r\geq 0$ is a real number. 

Our first result answers Question \ref{Question1} on a very large class of domains. 
We note that a defining function for a Lipschitz domain is assumed to be Lipschitz.

\begin{theorem}\label{ThmFunc}
Let $\D$ be a bounded pseudoconvex domain in $\C^n$ with Lipschitz boundary 
and a  defining function $\rho$. Assume that the Toeplitz operator $T_{\phi}$ is 
compact on $A^{2}(\D,|\rho|^r)$ for $ \phi \in C(\Dc)$ and a real number $r\geq 0$.   
Then $\phi=0$ on $b\D$. 
\end{theorem}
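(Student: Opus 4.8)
The plan is to prove the contrapositive: assuming $\phi \neq 0$ somewhere on $b\Omega$, I would construct a sequence of unit vectors in $A^2(\Omega, |\rho|^r)$ that converges weakly to $0$ but on which $T_\phi$ does not converge to $0$ in norm, contradicting compactness. Concretely, suppose $\phi(p) \neq 0$ for some boundary point $p \in b\Omega$. By continuity, $|\phi| \geq \delta > 0$ on a neighborhood of $p$ in $\overline{\Omega}$. The natural candidate for such a sequence is a family of normalized reproducing kernels, or more robustly, a sequence of holomorphic functions $f_j$ whose mass concentrates at $p$ as $j \to \infty$.

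\textbf{Construction of the test sequence.} Because $\Omega$ is bounded and pseudoconvex, I can exploit the existence of a holomorphic peak-type function or, more elementarily, use functions of the form $g_j(z) = c_j \, e^{j \psi(z)}$ or powers $(\ell(z))^j$ where $\ell$ is a suitable linear functional chosen so that $\mathrm{Re}\, \ell$ is maximized at $p$; after normalization in $A^2(\Omega, |\rho|^r)$ these concentrate near $p$. In the pseudoconvex Lipschitz setting without smoothness, the cleanest route is to let $f_j = K_j / \|K_j\|_h$ be the normalized Bergman kernels of the weighted space evaluated at points $a_j \to p$ along the inner normal-like direction; the key properties I need are that $\|f_j\|_h = 1$, that $f_j \to 0$ weakly (equivalently $f_j \to 0$ uniformly on compact subsets, which follows from the kernel estimates and Montel's theorem as already invoked in the excerpt), and that the $L^2(\Omega,h)$-mass of $f_j$ escapes to the boundary near $p$.

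\textbf{Estimating the Toeplitz norm from below.} Given such $f_j$, I would write $\langle T_\phi f_j, f_j\rangle_h = \langle \phi f_j, f_j\rangle_h = \int_\Omega \phi |f_j|^2 \, h \, dV$ and argue that this integral stays bounded away from $0$. Splitting the integral into the part near $p$ (where $|\phi| \geq \delta$ and where nearly all the mass of $|f_j|^2 h$ lives) and the remaining part (whose contribution vanishes because $f_j \to 0$ uniformly on compacta), I would conclude $|\langle T_\phi f_j, f_j\rangle_h| \geq \delta/2$ for large $j$, say. Since a compact operator maps weakly null sequences to norm-null sequences, $\|T_\phi f_j\|_h \to 0$, forcing $\langle T_\phi f_j, f_j\rangle_h \to 0$ by Cauchy--Schwarz, a contradiction.

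\textbf{The main obstacle} is establishing the concentration property of the test sequence under only Lipschitz boundary regularity and with the weight $|\rho|^r$ for general $r \geq 0$. On smooth strongly pseudoconvex domains one has precise off-diagonal kernel asymptotics, but here those tools are unavailable; I expect the real work lies in showing that one can choose approximating sequences whose normalized $L^2$-mass genuinely localizes at a prescribed boundary point despite the weight degenerating like $|\rho|^r$ near $b\Omega$. I would handle this by a localization/cut-off argument combined with an $L^2$ minimal-solution estimate for $\overline{\partial}$ (available on bounded pseudoconvex domains via Hörmander's theorem with the plurisubharmonic weight absorbing $|\rho|^r$), producing genuinely holomorphic concentrated functions from smooth concentrated bumps; verifying weak-null convergence and the mass split then becomes routine once the concentration is in hand.
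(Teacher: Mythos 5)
Your overall architecture (argue by contradiction at a boundary point $p$ with $\phi(p)\neq 0$, produce a weakly null sequence of unit vectors, and show the Berezin-type pairing $\langle T_\phi f_j,f_j\rangle$ stays away from $0$) matches the paper's, and you correctly identify where the difficulty lies. But the way you propose to resolve that difficulty contains a genuine gap. Your lower bound on $\langle T_\phi f_j,f_j\rangle=\int_\Omega\phi|f_j|^2h\,dV$ requires that \emph{nearly all} of the mass of $|f_j|^2h$ concentrates near $p$: if only a fixed positive fraction of the mass sits near $p$, the remaining term $\int_{\Omega\setminus U}\phi|f_j|^2h\,dV$ is only bounded by $\|\phi\|_\infty\|f_j\|^2_{L^2(\Omega\setminus U,h)}$ and can cancel the contribution from near $p$, since $\phi$ is complex-valued there. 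Full concentration of the normalized (weighted) Bergman kernel at a Lipschitz boundary point is precisely the kind of off-diagonal asymptotic that is unavailable in this generality, and your suggested fix --- solving $\overline{\partial}$ to correct a cut-off of a concentrated holomorphic bump --- is circular, because controlling the $\overline{\partial}$-correction and the cut-off error again requires knowing that the local kernel's mass concentrates. So as written the argument does not close.

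The paper avoids needing concentration altogether by two moves. First, an algebraic reduction (its Lemma \ref{LemToeplitz}): using the identity $T_{f_1}T_{f_2}=T_{f_1f_2}-H^*_{\overline{f}_1}H_{f_2}$ together with Stone--Weierstrass, compactness of $T_\phi$ forces compactness of $T_{\phi\psi}$ for every $\psi\in C(\overline{\Omega})$, in particular of $T_{|\phi|^2}$. With the nonnegative symbol $|\phi|^2$ the contribution from $\Omega\setminus U$ is $\geq 0$ and can simply be discarded, so one only needs a \emph{uniform lower bound} $\|k^{r,\rho}_{q_j}\|^2_{L^2(U,|\rho|^r)}\geq C_U>0$. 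Second, that lower bound is exactly what Bergman kernel localization gives: $K^{r,\rho}_U(q,q)\leq CK^{r,\rho}_\Omega(q,q)$ near $p$ (proved via H\"ormander's $L^2$ estimates with the weight $2n\log|z-q|-r\log|d_\Omega|$, Theorem \ref{ThmLocal}), combined with the elementary reproducing-kernel inequality of Lemma \ref{LemKernel}. Separately, note that the weak-null convergence $k^{r,\rho}_z\to 0$ that you treat as routine is itself nontrivial here: the paper proves it (Lemma \ref{LemWeakConv}) by a Forelli--Rudin inflation reducing the weighted case to the unweighted one on a higher-dimensional Lipschitz pseudoconvex domain, and then invoking hyperconvexity and pluricomplex Green function estimates. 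I would encourage you to adopt the $|\phi|^2$ reduction; it is the step that converts your (unprovable) full-concentration requirement into a provable one-sided localization estimate.
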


Theorem \ref{ThmFunc} does not hold in general. In Example \ref{Ex1} 
below we see that boundary regularity cannot be dropped as 
$\mathbb{D}\setminus \{0\}$ is a bounded non-Lipschitz (pseudoconvex) domain. 
In Example \ref{Ex2}, we see that the boundedness of the domain is necessary. 
We note that the domain in Example \ref{Ex2} is non-pseudoconvex. 
So it would be interesting if one could construct a similar example on  
a pseudoconvex domain with a non-trivial yet finite dimensional Bergman space. 
Finally, Example \ref{Ex3} shows that pseudoconvexity is necessary in 
Theorem \ref{ThmFunc}.   

\begin{example} \label{Ex1}
Consider the unit disc, $\mathbb{D}$, and $\mathbb{D}\setminus\{0\}$ in $\C$. 
We note that any domain in the complex plane (and hence  $\mathbb{D}\setminus\{0\}$)  
is pseudoconvex. Let $\phi \in C_{0}^{\infty}(\mathbb{D})$ such that $\phi(0) \neq 0$. 
Then one can show that $T_{\phi}$ is compact on $A^{2}(\mathbb{D}\setminus \{0\})$ 
because any $f\in A^2(\mathbb{D}\setminus \{0\})$ has a holomorphic extension 
to $\mathbb{D}$. 
\end{example}

\begin{example} \label{Ex2}
Let $\D\subset \C^2$ be an unbounded non-pseudoconvex Reinhardt domain such 
that the Bergman space $A^2(\D)$ is non-trivial finite dimensional (see \cite{Wiegerinck84}). 
Since on a finite dimensional space any linear map is compact, $T_{\phi}$ is compact 
on $A^2(\D)$ for any symbol $\phi \in L^{\infty}(\D)$.
\end{example}

\begin{example}\label{Ex3}
Let $\ball_{n,r}$ denote the ball in $\C^n$ centered at the origin with radius $r$ and 
$\D=\ball_{n,2}\setminus\overline{\ball_{n,1}}$ for $n\geq 2$. Then $\D$ is a 
$C^{\infty}$-smooth bounded non-pseudoconvex domain. By the Hartogs theorem 
any function holomorphic on $\D$ has a holomorphic extension onto $\ball_{n,2}$. 
Therefore, any $\phi$ that is bounded and compactly supported on $\ball_{n,2}$ 
(even if it is not zero on the boundary of $\ball_{n,1}$) will produce a compact Toeplitz 
operator $T_{\phi}$. 
\end{example}

Next, we turn our attention to $\dbar$-closed $(p,q)$-forms and prove a version of 
Theorem \ref{ThmFunc}  for $0\leq p\leq n$ and $1\leq q\leq n$ (see Corollary \ref{CorP0} 
below about the case for $q=0$). But first we set the notation.  

Let $K_{(p,q)}^2(\D)$ denote the set of square integrable $\dbar$-closed $(p,q)$-forms,  
which is a closed subspace of $L_{(p,q)}^2(\D)$, the square integrable  $(p,q)$-forms on 
$\D$ for $0\leq p,q\leq n$. We note that a $\dbar$-closed  $(p,0)$-form is composed of 
holomorphic components. So in that case we will use the notation $A^2_{(p,0)}(\D)$. 

Let 
\[P^{p,q}: L_{(p,q)}^2(\D) \rightarrow K_{(p,q)}^2(\D)\] 
denote the Bergman projection on $(p,q)$-forms. 
The Toeplitz operator $T^{p,q}_{\phi}:K_{(p,q)}^2(\D) \rightarrow K_{(p,q)}^2(\D)$  with 
symbol $\phi \in L^{\infty}(\D)$ is defined as
\[T^{p,q}_{\phi}f = P^{p,q}(\phi f)\] 
for $f \in K_{(p,q)}^2(\D)$. In a similar fashion, the Hankel operator 
$H^{p,q}_{\phi}: K_{(p,q)}^2(\D) \rightarrow L_{(p,q)}^2(\D)$ 
with symbol $\phi \in L^{\infty}(\D)$ is defined as 
\[H^{p,q}_{\phi}f = (I - P^{p,q})(\phi f) = \phi f - P^{p,q}(\phi f)\]
for $f \in K_{(p,q)}^2(\D)$. We note that  
\begin{align}\label{multiplication}
M^{p,q}_{\phi} = T^{p,q}_{\phi} + H^{p,q}_{\phi}.
\end{align}
where $M^{p,q}_{\phi}$ is the multiplication operator from  $K_{(p,q)}^2(\D)$ 
to  $L_{(p,q)}^2(\D)$. Furthermore, $H^{p,q}_{\phi}$ 
and $T^{p,q}_{\phi}$ are compact if and only if $M^{p,q}_{\phi}$ is compact. 

Theorem \ref{ThmFunc} shows that for $\phi \in C(\Dc)$ compactness of the Toeplitz 
operator $T_{\phi}$ on $A^2(\D)$ is equivalent to $\phi$ vanishing on the boundary. 
However,  the situation is much more drastic on $\dbar$-closed forms. Namely, 
compactness of the Toeplitz operator $T^{p,q}_{\phi}$  for $q\geq 1$ is equivalent 
to $\phi=0$ on the whole domain.  

\begin{theorem}\label{ThmForms}
Let $\D$ be a bounded pseudoconvex domain in $\C^n, 0\leq p\leq n,$ and 
$1\leq q\leq n$. Assume that $\phi \in C(\D) \cap L^{\infty}(\D)$. Then 
$T^{p,q}_{\phi}$ is compact on $K_{(p,q)}^2(\D)$ if and only if $\phi = 0$ on $\D$.
\end{theorem}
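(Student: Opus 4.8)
The \emph{if} direction is immediate: if $\phi=0$ then $T^{p,q}_{\phi}=0$, which is trivially compact. For the converse I would argue contrapositively and exploit the equivalence recorded after \eqref{multiplication}: since $T^{p,q}_{\phi}$ is compact if and only if the multiplication operator $M^{p,q}_{\phi}:K_{(p,q)}^2(\D)\to L_{(p,q)}^2(\D)$ is compact, it suffices to show that if $\phi(z_0)\neq 0$ for some interior point $z_0\in\D$, then $M^{p,q}_{\phi}$ fails to be compact. The plan is to manufacture a normalized sequence in $K_{(p,q)}^2(\D)$ that concentrates at $z_0$, tends weakly to $0$, and yet is not sent to $0$ in norm by $M^{p,q}_{\phi}$.

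The key construction is where the hypothesis $q\geq 1$ is used. First I would fix a nonzero smooth $\dbar$-closed $(p,q)$-form $\omega$ with compact support in the unit ball, obtained as $\omega=\dbar u$ for a compactly supported smooth $(p,q-1)$-form $u$ that is not $\dbar$-closed; such a $u$ exists because $q-1<n$ (for instance a monomial $\chi\,dz_I\wedge d\bar z_J$ with $|I|=p$, $|J|=q-1$, and $\chi$ chosen so that $\partial\chi/\partial\bar z_k\neq0$ for some index $k\notin J$, which is possible as $|J|<n$). Since $z_0$ is interior, for large $\lambda$ the affine dilation $z\mapsto\lambda(z-z_0)$ pulls $\omega$ back to a form supported in the ball $B(z_0,1/\lambda)\subset\D$; because this map is holomorphic, the pullback stays $\dbar$-closed. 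Normalizing in $L^2$ produces $\omega_{\lambda}\in K_{(p,q)}^2(\D)$ with $\|\omega_{\lambda}\|=1$ and $\supp\omega_{\lambda}\subset B(z_0,1/\lambda)$. Weak convergence is then free: for every fixed $g\in L_{(p,q)}^2(\D)$ the Cauchy--Schwarz inequality gives $|\langle\omega_{\lambda},g\rangle|\le\|g\|_{L^2(B(z_0,1/\lambda))}\to0$, so $\omega_{\lambda}\rightharpoonup0$ as $\lambda\to\infty$.

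The final step is where continuity of $\phi$ enters. Splitting $\phi\omega_{\lambda}=\phi(z_0)\omega_{\lambda}+(\phi-\phi(z_0))\omega_{\lambda}$ and using $\|\omega_{\lambda}\|=1$ together with $\supp\omega_{\lambda}\subset B(z_0,1/\lambda)$, the reverse triangle inequality yields
\[
\big|\,\|M^{p,q}_{\phi}\omega_{\lambda}\|-|\phi(z_0)|\,\big|\le\sup_{B(z_0,1/\lambda)}|\phi-\phi(z_0)|\longrightarrow0,
\]
so $\|M^{p,q}_{\phi}\omega_{\lambda}\|\to|\phi(z_0)|>0$. On the other hand, if $M^{p,q}_{\phi}$ were compact, then $\omega_{\lambda}\rightharpoonup0$ would force $\|M^{p,q}_{\phi}\omega_{\lambda}\|\to0$, a contradiction. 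Hence $\phi(z_0)=0$ for every $z_0\in\D$, and continuity gives $\phi\equiv0$ on $\D$. I expect the only delicate points to be the explicit construction of a \emph{nonzero} compactly supported $\dbar$-closed $(p,q)$-form for each $q\ge1$ and the bookkeeping that its dilates stay in $\D$ and converge weakly to $0$; notably, none of this uses pseudoconvexity, which suggests that hypothesis is retained here only for uniformity with Theorem \ref{ThmFunc} rather than out of necessity for this implication.
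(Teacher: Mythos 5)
Your construction of the concentrating forms $\omega_{\lambda}$ is correct, and it does show that $M^{p,q}_{\phi}$ fails to be compact when $\phi(z_0)\neq 0$. The gap is the very first reduction: the remark after \eqref{multiplication} says that $M^{p,q}_{\phi}$ is compact if and only if \emph{both} $T^{p,q}_{\phi}$ and $H^{p,q}_{\phi}$ are compact; it does not say that $T^{p,q}_{\phi}$ is compact if and only if $M^{p,q}_{\phi}$ is. Since $M^{p,q}_{\phi}=T^{p,q}_{\phi}+H^{p,q}_{\phi}$, compactness of $T^{p,q}_{\phi}$ alone gives nothing about $M^{p,q}_{\phi}$ unless you separately control the Hankel operator, and for a symbol that is merely in $C(\D)\cap L^{\infty}(\D)$ there is no a priori reason for $H^{p,q}_{\phi}$ to be compact. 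This is exactly the point the paper has to work for: it replaces $\phi$ by $\phi\psi$ with $\psi\in C^{\infty}_0(\D)$ (Lemma \ref{LemToeplitzForms} keeps $T^{p,q}_{\phi\psi}$ compact), and then invokes Lemma \ref{compactnesshankel} --- which rests on Kohn's formula and the $\dbar$-Neumann operator, hence on pseudoconvexity --- to get compactness of $H^{p,q}_{\phi\psi}$, so that non-compactness of $M^{p,q}_{\phi\psi}$ really does transfer to $T^{p,q}_{\phi\psi}$. Consequently your closing claim that pseudoconvexity plays no role is not supported by the argument as written: the bridge from $M$ back to $T$ is missing, and the paper's bridge is built out of pseudoconvexity.

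The gap is local, however, and your sequence can do the whole job without ever mentioning $M^{p,q}_{\phi}$ or $H^{p,q}_{\phi}$: since $\omega_{\lambda}\in K^2_{(p,q)}(\D)$ and $P^{p,q}$ is a self-adjoint projection,
\[
\langle T^{p,q}_{\phi}\omega_{\lambda},\omega_{\lambda}\rangle
=\langle \phi\,\omega_{\lambda},P^{p,q}\omega_{\lambda}\rangle
=\langle \phi\,\omega_{\lambda},\omega_{\lambda}\rangle
=\int_{\D}\phi\,|\omega_{\lambda}|^2\,dV\longrightarrow \phi(z_0),
\]
using continuity of $\phi$ at $z_0$ and $\supp\,\omega_{\lambda}\subset B(z_0,1/\lambda)$, whereas compactness of $T^{p,q}_{\phi}$ together with $\omega_{\lambda}\rightharpoonup 0$ would force this quantity to $0$. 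With that one-line repair your proof closes, is genuinely different from the paper's (which builds a separated sequence $F_j(z)=f_j(z_1)\,dz_1\wedge\cdots\wedge dz_p\wedge d\zb_1\wedge\cdots\wedge d\zb_q$ from the non-compactness of $M_{\phi_0}$ on $L^2(\disk)$, via \cite[Corollary 1.1]{SinghKumar1979}, and then passes through the Hankel operator), and would in fact vindicate your remark that pseudoconvexity is not needed for this implication --- but that conclusion has to be earned by the direct pairing above, not by the $M$--$T$ equivalence you quoted.
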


The theorem above has the following immediate corollary.

\begin{corollary}
Let $\D$ be a bounded pseudoconvex domain in $\C^n, 0\leq p\leq n,$ and 
$1\leq q\leq n$. Assume that $\phi \in C(\D) \cap L^{\infty}(\D)$. Then the 
following are equivalent 
\begin{itemize}
\item[i.] the multiplication operator $M^{p,q}_{\phi}$ is compact on $K_{(p,q)}^2(\D)$,
\item[ii.] the Toeplitz operator $T^{p,q}_{\phi}$ is compact on $K_{(p,q)}^2(\D)$, 
\item[iii.]  $\phi = 0$ on $\D$.
\end{itemize}
\end{corollary}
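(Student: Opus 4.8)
The plan is to establish the nontrivial (``only if'') direction by localizing the quadratic form of $T^{p,q}_\phi$ at an arbitrary \emph{interior} point of $\D$. This is exactly where the hypothesis $q\ge 1$ is decisive: unlike the holomorphic case $q=0$, the space $K_{(p,q)}^2(\D)$ contains an abundance of compactly supported $\dbar$-closed forms, so one can concentrate test forms at any point of $\D$ rather than only near the boundary. The ``if'' direction is immediate, since $\phi=0$ on $\D$ forces $\phi f=0$ and hence $T^{p,q}_\phi=0$.

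First I would fix $a\in\D$ and produce one fixed nonzero smooth $\dbar$-closed $(p,q)$-form $g$ with compact support in the unit ball $\ball$. This is the crux of the whole difference between $q\ge 1$ and $q=0$: choosing a smooth compactly supported $(p,q-1)$-form $h=\chi\,dz_I\wedge d\bar z_{J'}$ with $|I|=p$, $|J'|=q-1$, and $\chi$ a bump that depends nontrivially on some $\bar z_k$ with $k\notin J'$, the form $g=\dbar h$ is smooth, compactly supported, nonzero, and automatically $\dbar$-closed because $\dbar^2=0$. Such a $g$ exists precisely because $q-1\ge 0$, and no analogous compactly supported object exists among holomorphic forms.

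Next I would translate and rescale $g$ to concentrate at $a$: writing $g=\sum g_{I,J}\,dw_I\wedge d\bar w_J$, set
\[
f_\delta=\delta^{-n}\sum_{I,J} g_{I,J}\!\Big(\tfrac{z-a}{\delta}\Big)\,dz_I\wedge d\bar z_J ,
\]
which for small $\delta>0$ is supported in $B(a,\delta)\subset\subset\D$, is again $\dbar$-closed (pullback commutes with $\dbar$), and satisfies $\|f_\delta\|=\|g\|$ after the substitution $w=(z-a)/\delta$. Testing against smooth compactly supported forms together with the uniform norm bound shows $f_\delta\rightharpoonup 0$ weakly in $K_{(p,q)}^2(\D)$ as $\delta\to 0$. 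Since $f_\delta\in K_{(p,q)}^2(\D)$ we have $P^{p,q}f_\delta=f_\delta$, so self-adjointness of $P^{p,q}$ yields
\[
\langle T^{p,q}_\phi f_\delta, f_\delta\rangle=\langle \phi f_\delta, f_\delta\rangle=\int_\D \phi\,|f_\delta|^2\,dV=\int \phi(a+\delta w)\,|g(w)|^2\,dV(w),
\]
where $|\cdot|$ is the pointwise norm of the form; by continuity of $\phi$ at $a$, boundedness of $\phi$, and dominated convergence this tends to $\phi(a)\,\|g\|^2$ as $\delta\to0$.

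Finally, compactness of $T^{p,q}_\phi$ forces $\|T^{p,q}_\phi f_\delta\|\to 0$ along any sequence $\delta\to 0$ (a compact operator sends weakly null sequences to norm-null ones), so that $|\langle T^{p,q}_\phi f_\delta, f_\delta\rangle|\le \|T^{p,q}_\phi f_\delta\|\,\|g\|\to 0$. Comparing with the limit above gives $\phi(a)\,\|g\|^2=0$, and since $\|g\|\neq 0$ we conclude $\phi(a)=0$; as $a\in\D$ is arbitrary, $\phi\equiv 0$ on $\D$. I expect the only genuinely delicate bookkeeping to be the construction of the compactly supported $\dbar$-closed bump form and the verification that rescaling preserves $\dbar$-closedness while producing a norm-normalized weakly null family; the analytic core is then routine. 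I would also remark that this argument uses only that $\D$ is a bounded domain, so pseudoconvexity is not actually needed for this direction.
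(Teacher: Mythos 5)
Your argument is correct, and it takes a genuinely different route from the paper's. The paper treats this corollary as immediate from Theorem \ref{ThmForms} together with the trivial observations that $\phi=0$ gives $M^{p,q}_\phi=0$ and that $M^{p,q}_\phi$ compact gives $T^{p,q}_\phi=P^{p,q}M^{p,q}_\phi$ compact; the real work is in Theorem \ref{ThmForms}, whose proof goes through the decomposition $M^{p,q}_\phi=T^{p,q}_\phi+H^{p,q}_\phi$: one first shows $M^{p,q}_{\phi\psi}$ is not compact (via the Singh--Kumar criterion for multiplication operators on $L^2(\mathbb{D})$ applied to a one-variable slice of $\phi$), and then transfers non-compactness to $T^{p,q}_{\phi\psi}$ using Lemma \ref{compactnesshankel}, the compactness of Hankel operators with symbols vanishing on $b\D$ --- a lemma whose proof relies on the $\dbar$-Neumann operator, compactness multipliers, and hence pseudoconvexity. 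You instead test the quadratic form of $T^{p,q}_\phi$ directly on a concentrating family of compactly supported $\dbar$-closed bumps $f_\delta$ obtained by rescaling a fixed $g=\dbar h$: since $P^{p,q}f_\delta=f_\delta$ and $P^{p,q}$ is self-adjoint, $\langle T^{p,q}_\phi f_\delta,f_\delta\rangle=\int_\D\phi\,|f_\delta|^2\,dV\to\phi(a)\|g\|^2$, while complete continuity of the compact operator on the weakly null, norm-one family forces this limit to vanish. This bypasses Hankel operators and the $\dbar$-Neumann machinery entirely and, as you observe, uses no pseudoconvexity, so your implication (ii) $\Rightarrow$ (iii) holds on an arbitrary bounded domain --- a genuine strengthening; what the paper's longer route buys is Lemma \ref{compactnesshankel} itself, which is of independent interest. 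Two minor points to tidy: state the trivial implications involving (i) explicitly to close the three-way equivalence, and in constructing $g$ note that the nonzero coefficients $\partial\chi/\partial\zb_k$ for $k\notin J'$ multiply linearly independent monomials $d\zb_k\wedge dz_I\wedge d\zb_{J'}$, so no cancellation can make $g$ vanish.
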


The rest of the paper is organized as follows: In the next section we prove some 
useful lemmas. In Section \ref{SecProofThmMain} we present the proof of 
Theorem \ref{ThmFunc} and state some corollaries. 
We end the paper with the proof of Theorem \ref{ThmForms}.
  
\section{Background} \label{SecBackground} 
Throughout the paper $\D$ is assumed to be a bounded domain and 
$h:\D\to (0,\infty)$ is a continuous function so that the weighted 
Bergman space $A^2(\D, h)$ is non-trivial. Then by Riesz representation 
theorem, there exists the weighted Bergman kernel $K^h:\D\times\D\to \C$ 
such that $K^h_z=K^h(\cdot,z)\in A^2(\D, h)$ and 
$f(z)=\langle f,K^h_z\rangle_{L^2(\D, h)}$ for any $f\in A^2(\D, h)$. 
The Cauchy-Schwarz Inequality implies that  
\begin{align}\label{CauchySchwarz}
|f(z)| \leq \|f\|_{L^2(\D,h)}\sqrt{K^h(z,z)}
\end{align}
for $f \in A^2(\D,h)$ and $z \in \D$. Furthermore, 
\begin{align}\label{maximum}
K^h(z,z)= \text{max}\{|f(z)|^2: f \in A^2(\D,h), \|f\|_{L^2(\D, h)}\leq 1\} 
\end{align}
for any $z \in \D$. 

We note that we will drop the superscript to denote the unweighted Bergman 
kernel. That is, for $h=1$ we denote the (unweighted) Bergman kernel by 
$K(w,z)=K_z(w)$.  The corresponding normalized Bergman kernel $k_z$ is 
defined as 
\[k_z(w)=\dfrac{K(w,z)}{\sqrt{K(z,z)}}\] 
for $w,z \in \D$. That is, $\|k_z\|_{L^2(\D)}=1$. 

We say that $g_k \rightarrow 0$ weakly as 
$k \rightarrow \infty$ when $\langle f,g_k \rangle \rightarrow 0$ as  
$k \rightarrow \infty$ for all $f \in A^2(\D)$. We note that, when 
$\D \subset \C^n$ is a $C^\infty$-smooth bounded pseudoconvex or a bounded 
convex domain, the normalized Bergman kernel $k_{z} \rightarrow 0$ weakly 
as $z \rightarrow b\D$ (see \cite[Lemma 4.9]{CuckovicSahutolgu18} or 
\cite[Lemma 5.6]{Tikaradze15}). We generalize this fact in the following lemma.

\begin{lemma}\label{LemWeakLip}
Let $\D$ be a bounded pseudoconvex domain in $\C^n$ with Lipschitz boundary 
and $p \in b\D$, then $k_z \rightarrow 0$ weakly as $z \rightarrow p$.
\end{lemma}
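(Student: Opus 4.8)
The plan is to establish the sharper quantitative statement that the $L^2$-mass of $k_z$ concentrates at $p$: for every $r>0$,
\[
\int_{\D\setminus B(p,r)}|k_z|^2\,dV\longrightarrow 0\qquad\text{as }z\to p,
\]
equivalently $\int_{\D\cap B(p,r)}|k_z|^2\,dV\to1$. Granting this, weak convergence follows by a routine splitting. Given $f\in A^2(\D)$ and $\ep>0$, I would first pick $r$ so small that $\|f\|_{L^2(\D\cap B(p,r))}<\ep$ (possible since $|f|^2\in L^1(\D)$), and then write $\langle f,k_z\rangle=\int_{\D\cap B(p,r)}f\overline{k_z}\,dV+\int_{\D\setminus B(p,r)}f\overline{k_z}\,dV$. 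By Cauchy--Schwarz and $\|k_z\|_{L^2(\D)}=1$ the first term is at most $\ep$, while the second is bounded by $\|f\|_{L^2(\D)}\,\|k_z\|_{L^2(\D\setminus B(p,r))}$, which tends to $0$ as $z\to p$ by the mass-concentration statement. Hence $\langle f,k_z\rangle\to0$ for every $f$, which is the asserted weak convergence.

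To prove the mass concentration I would compare $\D$ with the local piece $\D_r=\D\cap B(p,r)$, which is again bounded and pseudoconvex. Restricting $k_z$ to $\D_r$ gives an element of $A^2(\D_r)$ with $|k_z(z)|^2=K(z,z)$, so the extremal characterization \eqref{maximum} applied on $\D_r$ yields
\[
K_{\D_r}(z,z)\ge\frac{|k_z(z)|^2}{\|k_z\|_{L^2(\D_r)}^2}=\frac{K(z,z)}{\|k_z\|_{L^2(\D_r)}^2},\qquad\text{i.e.}\qquad \|k_z\|_{L^2(\D_r)}^2\ge\frac{K(z,z)}{K_{\D_r}(z,z)}.
\]
Since $\D_r\subset\D$ forces $K_{\D_r}(z,z)\ge K(z,z)$, this ratio is at most $1$; thus the whole argument reduces to the localization estimate $\lim_{z\to p}K(z,z)/K_{\D_r}(z,z)=1$, after which $\|k_z\|_{L^2(\D_r)}^2\to1$ and the complementary mass tends to $0$.

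This localization estimate is the heart of the matter and the step I expect to be the main obstacle; it is exactly where pseudoconvexity and the Lipschitz hypothesis enter, and where the punctured disc of Example \ref{Ex1} fails (there the analogous ratio tends to $r^2$, and indeed $k_z$ does \emph{not} converge weakly to $0$ at the puncture). The inequality $K_{\D_r}(z,z)\ge K(z,z)$ is free; for the reverse I would take the extremal function $g$ for $\D_r$ at $z$, cut it off by a smooth $\chi$ supported in $B(p,r)$ and identically $1$ near $p$, and correct $\chi g$ to a holomorphic function on $\D$ by solving $\dbar u=g\,\dbar\chi$ via Hörmander's weighted $L^2$-estimates on the pseudoconvex domain $\D$. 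Since $g\,\dbar\chi$ is supported in $B(p,r)\setminus B(p,r/2)$, away from $z$, a plurisubharmonic weight that is large on that shell makes $|u(z)|$ negligible compared with $g(z)=\sqrt{K_{\D_r}(z,z)}$, which blows up as $z\to p$ precisely because $p$ is a Lipschitz boundary point. The function $F=\chi g-u\in A^2(\D)$ is then holomorphic with $|F(z)|^2/\|F\|_{L^2(\D)}^2\ge(1-\ep)K_{\D_r}(z,z)$, giving $K(z,z)\ge(1-\ep)K_{\D_r}(z,z)$ for $z$ near $p$. The genuine technical difficulty is thus the simultaneous control of the $\dbar$-correction and of the boundary blow-up of $K_{\D_r}(z,z)$; alternatively one may simply invoke the known localization principle for the Bergman kernel on bounded pseudoconvex domains.
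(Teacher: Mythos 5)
Your reduction of weak convergence to the mass--concentration statement, and of that to the localization $K(z,z)/K_{\D\cap B(p,r)}(z,z)\to 1$, is correctly reasoned (the first inequality you use is the paper's Lemma \ref{LemKernel}). The gap is in the final, and as you say crucial, step: the cutoff-and-correct argument you sketch does not prove that the ratio tends to $1$. Solving $\dbar u=g\,\dbar\chi$ with a weight carrying a logarithmic pole at $z$ does force $u(z)=0$, but H\"ormander's estimate only bounds $\|u\|_{L^2(\D)}$ by a constant times $\|g\,\dbar\chi\|_{L^2}$, and the latter is merely $\leq\|\dbar\chi\|_\infty\|g\|_{L^2(\D_r)}=\|\dbar\chi\|_\infty$ --- bounded, not small. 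So you get $K_{\D_r}(z,z)\leq(1+C)^2K(z,z)$ with a uniform constant $C$, which is exactly the paper's Theorem \ref{ThmLocal} (Ohsawa's localization principle), and this is \emph{not} enough: your own Example \ref{Ex1} computation shows that a constant-factor localization can hold while $k_z\not\to 0$ weakly. To push the constant to $1$ you would need $\|g\,\dbar\chi\|_{L^2}\to 0$ as $z\to p$, i.e.\ that the extremal function $g=k_{\D_r,z}$ itself concentrates its mass near $p$ --- which is the very statement you are trying to prove, one level down. The fallback of ``invoking the known localization principle'' does not close this, since the standard principle for general bounded pseudoconvex domains is the constant-factor version.

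What is missing is the ingredient that actually uses the Lipschitz hypothesis. The paper's route is different and avoids mass concentration altogether: Lipschitz pseudoconvex domains are hyperconvex (Demailly), and Herbort's estimate through the pluricomplex Green function gives $|f(z)|^2/K(z,z)\leq C_{\D}\int_{\Gamma_z}|f|^2\,dV$ where $\Gamma_z$ is a sublevel set of $g_{\D}(z,\cdot)$ whose Lebesgue measure shrinks to $0$ as $z\to p$; absolute continuity of the integral then gives $\langle f,k_z\rangle\to 0$ directly, for each fixed $f$. If you want to salvage your quantitative approach, you would need a ratio-one localization theorem for hyperconvex domains (such results exist, but they rest on the same Green-function machinery), so the honest accounting is that your proposal defers the entire content of the lemma to an unproved localization statement.
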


\begin{proof}
We note that a bounded pseudoconvex domain with Lipschitz boundary in $\C^n$ is 
hyperconvex (see \cite{Demailly87} and \cite[Theorem* 12.4.6]{JarnickiPflugBook2ndEd}). 
For $z\in \D$ we define 
\[\Gamma_z=\left\{w \in \D : g_{\D}(z,w) < \dfrac{1}{e}\right\}\] 
where $g_{\D}(z,w)$ is the pluricomplex Green function. 
By  \cite[Theorem 12.4.3]{JarnickiPflugBook2ndEd} (see also  \cite[Proposition 3.6]{Herbort99}), 
there exists a constant $C_{\D}>0$ such that for $f \in A^2(\D)$ and $z\in \D$ we have
\[|\langle f,k_z \rangle_{L^2(\D)}|^2 
=  \dfrac{|f(z)|^2}{K(z,z)}\leq C_{\D}\int_{\Gamma_z} |f|^2dV.\]
On the other hand, as $z \rightarrow p$, the Lebesgue measure of $\Gamma_z$ goes to $0$  
(see \cite[Theorem 12.4.4]{JarnickiPflugBook2ndEd}). 
 Then $\lim_{z \rightarrow p}\langle f,k_z \rangle_{L^2(\D)}=0$. That is, 
$k_z\rightarrow 0$ weakly as $z \rightarrow p$.  
\end{proof}

We note that without any regularity assumption of the boundary, Lemma \ref{LemWeakLip} 
is not true. See \cite[Remark 1]{CuckovicSahutoglu21} for a counterexample.

\begin{lemma}\label{LemToeplitz}	
Let $\D$ be a bounded domain in $\C^n,\psi \in C(\Dc),\phi\in L^{\infty}(\D)$,  
and $h:\D\to (0,\infty)$ be a continuous function. Assume that $T_{\phi}$ is compact 
on $A^2(\D,h)$. Then $T_{\phi\psi}$ is compact on $A^2(\D,h)$. 
\end{lemma}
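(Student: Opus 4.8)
The plan is to reduce, by a density argument, to the case where $\psi$ is a polynomial in the coordinate functions $z_1,\dots,z_n,\bar z_1,\dots,\bar z_n$, and then to build such symbols up one coordinate factor at a time. Concretely, I would consider the set
\[S=\{\psi\in C(\Dc): T_{\phi\psi} \text{ is compact on } A^2(\D,h)\}.\]
Since $T_{\phi\psi}-T_{\phi\eta}=T_{\phi(\psi-\eta)}$ with $\|T_{\phi(\psi-\eta)}\|\leq\|\phi\|_{L^\infty(\D)}\|\psi-\eta\|_{L^\infty(\D)}$, and since the compact operators form a closed subspace in the operator norm, $S$ is a closed linear subspace of $C(\Dc)$. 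By hypothesis $1\in S$. By the Stone--Weierstrass theorem the polynomials in $z_j,\bar z_j$ are dense in $C(\Dc)$, so it suffices to show that $S$ contains every such polynomial; for that it is enough to show $S$ is stable under multiplication by each coordinate function $z_j$ and $\bar z_j$, since these, together with $1$, generate all such polynomials and $S$ is a closed subspace.

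The key observation is that multiplication by $z_j$ preserves holomorphy and, since $\D$ is bounded, maps $A^2(\D,h)$ boundedly into itself with $P(z_jf)=z_jf$. Hence for any $g\in L^\infty(\D)$ one has the identity $T_{gz_j}=T_gM_{z_j}$: indeed $T_{gz_j}f=P(gz_jf)=T_g(z_jf)=T_gM_{z_j}f$. Consequently, if $T_g$ is compact then $T_{gz_j}$ is compact, being the composition of a compact operator with the bounded operator $M_{z_j}$; applying this with $g=\phi\psi$ shows $\psi\in S\Rightarrow z_j\psi\in S$. To handle $\bar z_j$ I would pass to adjoints. Using the weighted reproducing property one checks $T_g^*=T_{\bar g}$, so compactness of $T_g$ gives compactness of $T_{\bar g}$; then $T_{\bar g z_j}=T_{\bar g}M_{z_j}$ is compact by the previous identity, and taking adjoints once more $T_{g\bar z_j}=(T_{\bar g z_j})^*$ is compact. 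Thus $\psi\in S\Rightarrow\bar z_j\psi\in S$ as well.

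Combining these, $S$ is a closed subspace containing $1$ and stable under multiplication by all $z_j,\bar z_j$, hence contains every polynomial in $z,\bar z$, and therefore $S=C(\Dc)$ by density, which is the assertion. The only points requiring care are the two operator identities $T_{gz_j}=T_gM_{z_j}$ and $T_g^*=T_{\bar g}$, both of which I would verify directly from the definitions of $T_\phi$, $P$, and the weighted inner product; in particular the adjoint identity must be checked against $\langle\cdot,\cdot\rangle_{L^2(\D,h)}$, though the symmetric role of the weight makes this routine. I do not anticipate a serious obstacle here, since no boundary regularity or pseudoconvexity of $\D$ is used---only boundedness, which is exactly what guarantees that the coordinate functions act as bounded multipliers.
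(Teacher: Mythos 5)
Your proof is correct and takes essentially the same route as the paper's: both arguments reduce the problem to showing that $T_{\zb^K\phi z^L}$ is compact for all multi-indices and then conclude via Stone--Weierstrass together with the operator-norm bound $\|T_{\phi(\psi-P_n)}\|\leq\|\phi\|_{L^\infty(\D)}\|\psi-P_n\|_{L^\infty(\D)}$ and the norm-closedness of the compact operators. The only real difference is cosmetic: where the paper invokes the Toeplitz--Hankel product formula $T_{f_1}T_{f_2}=T_{f_1f_2}-H^*_{\overline{f}_1}H_{f_2}$ together with $H_{z^K}=0$, you use the equivalent adjoint identity $T_g^*=T_{\overline{g}}$ (which indeed holds on $A^2(\D,h)$ since $P$ is the orthogonal projection for the weighted inner product), and your one-coordinate-at-a-time bookkeeping via the closed subspace $S$ is a harmless repackaging of the paper's induction over monomials.
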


\begin{proof}
We recall the connection between Toeplitz and Hankel Operators,
\begin{align}
\label{Lem1Eqn1}T_{f_1}T_{f_2}=T_{f_1f_2}-H^*_{\overline{f}_1}H_{f_2}
\end{align}
where $H_{\overline{f}_1}$ and $H_{f_{2}}$ are Hankel operators with symbols 
$\overline{f}_1$ and $f_{2}$, respectively (see, for instance, equation 
	(8.6) in \cite{ZhuBook}).

We consider the Toeplitz operators  $T_{z^L}$ and $T_{\zb^K}$ on $A^2(\D,h)$ for 
$L,K\in \mathbb{N}_{0}^{n}$ where $\mathbb{N}_0=\{0,1,2,\ldots\}$. We observe 
that $T_{z^L}$ and $T_{\zb^K}$ are bounded operators. Then $T_{\zb^K}T_{\phi}T_{z^L}$ 
is compact as $T_{\phi}$ is compact.   

Now, we want to show that $T_{\zb^K}T_{\phi}T_{z^L}=T_{\zb^K\phi z^L}$.  
We let $g \in A^2(\D,h)$ and consider $T_{\phi}T_{z^L}$. Then
\[T_{\phi}T_{z^L}g=T_{\phi}P(z^Lg)=T_{\phi}(z^Lg)=P(\phi z^Lg)=T_{\phi z^L}g.\]
From \eqref{Lem1Eqn1} and the fact that $H_{z^K}=0$, we have, 
\begin{align*}
T_{\zb^K}T_{\phi}T_{z^L}g = T_{\zb^K}T_{\phi z^L}g 
= T_{\zb^K\phi z^L}g - H_{z^K}^*H_{\phi z^L}g 
= T_{\zb^K\phi z^L}g.
\end{align*}   
Hence, $T_{\zb^K\phi z^L}$ is a compact operator for all $K,L\in \mathbb{N}_{0}^{n}$. 
Since $\psi \in C(\Dc)$, by the Stone-Weierstrass Theorem, there exists a sequence 
of polynomials $\{P_{n}(z,\zb)\}_{n \in \mathbb{N}}$ such that 
$P_{n} \to \psi$ uniformly on $\Dc$ as $n\to \infty$. The compactness of an operator is 
preserved under convergence in operator norm. Since $T_{\phi P_{n}}$ is compact for 
each $n$ and $T_{\phi P_{n}} \rightarrow T_{\phi \psi}$ in operator norm as $n\to\infty$, 
the operator $T_{\phi\psi}$ is compact on $A^2(\D,h)$.
\end{proof}

\begin{lemma}\label{LemKernel}
Let $\D$ be a bounded domain in $\C^n$, $U$ be a subdomain in $\D$,  
and $h:\D\to (0,\infty)$ be a continuous function such that 
$K_{\D}^h(q,q)>0$ for $q\in U$.  Then for  $q \in U$, we have 
\[ \dfrac{K_{\D}^h(q,q)}{K_U^h(q,q)} \leq \|k_q^h\|_{L^2(U,h)}^2\] 
where $k_q^h=\frac{K_{\D}^h(\cdot,q)}{\sqrt{K_{\D}^h(q,q)}}$ 
is the normalized Bergman kernel of $\D$.
\end{lemma}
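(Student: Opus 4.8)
The plan is to apply the extremal characterization of the weighted Bergman kernel, equation \eqref{maximum}, on the subdomain $U$, using the restriction of the normalized kernel $k_q^h$ as a test function.

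First I would record the two facts that make this work. Since $k_q^h \in A^2(\D,h)$ with $\|k_q^h\|_{L^2(\D,h)} = 1$ and $U \subseteq \D$, restricting the defining integral to $U$ gives $\|k_q^h\|_{L^2(U,h)} \le 1 < \infty$, so $k_q^h|_U \in A^2(U,h)$. Evaluating at the center,
\[
k_q^h(q) = \frac{K_{\D}^h(q,q)}{\sqrt{K_{\D}^h(q,q)}} = \sqrt{K_{\D}^h(q,q)},
\qquad\text{hence}\qquad
|k_q^h(q)|^2 = K_{\D}^h(q,q) > 0 .
\]
In particular $k_q^h$ does not vanish identically on $U$, so (as $h$ is continuous and strictly positive) $\|k_q^h\|_{L^2(U,h)} > 0$; this makes the normalization below legitimate and also shows $A^2(U,h)$ is non-trivial, so that \eqref{maximum} is indeed available on $U$.

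Next I would form the unit vector $f = k_q^h / \|k_q^h\|_{L^2(U,h)} \in A^2(U,h)$, which has $\|f\|_{L^2(U,h)} = 1$. Feeding this competitor into the maximum characterization \eqref{maximum} for $K_U^h(q,q)$ gives
\[
K_U^h(q,q) \ge |f(q)|^2 = \frac{|k_q^h(q)|^2}{\|k_q^h\|_{L^2(U,h)}^2} = \frac{K_{\D}^h(q,q)}{\|k_q^h\|_{L^2(U,h)}^2} .
\]
Rearranging (and dividing by $K_U^h(q,q) > 0$) yields exactly
\[
\frac{K_{\D}^h(q,q)}{K_U^h(q,q)} \le \|k_q^h\|_{L^2(U,h)}^2 ,
\]
which is the claim.

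There is no serious obstacle here: the content is entirely the monotonicity argument that restricting a $\D$-normalized kernel to $U$ produces an admissible test function whose value at $q$ is $\sqrt{K_{\D}^h(q,q)}$. The one point requiring care is verifying that this test function is a \emph{nonzero}, finite-norm competitor on $U$ — that is, $0 < \|k_q^h\|_{L^2(U,h)} \le 1$ — which I handle above using $U \subseteq \D$, the hypothesis $K_{\D}^h(q,q) > 0$, and the continuity and positivity of $h$.
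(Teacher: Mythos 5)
Your proof is correct and is essentially the paper's argument in disguise: the paper writes $\sqrt{K_{\D}^h(q,q)} = \langle k_q^h, K_{U,q}^h\rangle_{L^2(U,h)} \leq \|k_q^h\|_{L^2(U,h)}\sqrt{K_U^h(q,q)}$, i.e.\ the reproducing property of $K_U^h$ applied to $k_q^h|_U$ followed by Cauchy--Schwarz, which is exactly the content of the extremal characterization \eqref{maximum} that you invoke on $U$. The extra care you take to verify $0 < \|k_q^h\|_{L^2(U,h)} \leq 1$ and $K_U^h(q,q)>0$ is harmless and correct, though the paper leaves it implicit.
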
 

\begin{proof}
Let  $q\in U$. Then
\begin{align*}
\sqrt{K_{\D}^h(q,q)}  = \langle k_q^h, K_{U,q}^h \rangle_{L^2(U,h)} 
\leq \|k_q^h\|_{L^2(U,h)}\sqrt{K_U^h(q,q)}.
\end{align*}
Therefore, we have 
\[\dfrac{K_{\D}^h(q,q)}{K_U^h(q,q)} \leq \|k_q^h\|_{L^2(U,h)}^2 \] 
and the proof of the lemma is complete. 
\end{proof}
The next lemma shows that on a bounded domain, the symbol vanishing on the 
boundary is sufficient for compactness of the Toeplitz operator.

\begin{lemma}\label{LemCompact}
Let $\D$ be a bounded domain in $\C^n$ and $h:\D\to (0,\infty)$ be a 
continuous function. Assume that $\phi\in C(\Dc)$ such that $\phi=0$ on $b\D$. 
Then the multiplication operator $M_{\phi}$ is compact on $A^2(\D,h)$. 
\end{lemma}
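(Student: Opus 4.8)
The plan is to verify compactness of $M_\phi$ directly, via the sequential criterion. Since $M_\phi$ carries $A^2(\D,h)$ into $L^2(\D,h)$, it suffices to show that every sequence $\{f_k\}\subset A^2(\D,h)$ with $\|f_k\|_{L^2(\D,h)}\leq 1$ admits a subsequence for which $\{M_\phi f_k\}$ converges in $L^2(\D,h)$. The two ingredients driving the argument are a normal-families (Montel) extraction, made available by the pointwise bound \eqref{CauchySchwarz}, and the hypothesis that $\phi$ is small near $b\D$. The only genuinely delicate point is the Montel step and the verification that the locally uniform limit still lies in $A^2(\D,h)$; the rest is a routine splitting estimate.

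First I would carry out the extraction. Given $\|f_k\|_{L^2(\D,h)}\leq 1$, the estimate \eqref{CauchySchwarz} gives $|f_k(z)|\leq \sqrt{K^h(z,z)}$, and the diagonal $z\mapsto K^h(z,z)$ is locally bounded on $\D$ (this follows from the sub-mean-value inequality for holomorphic functions together with the continuity and strict positivity of $h$, or directly from \eqref{maximum}). Hence $\{f_k\}$ is uniformly bounded on compact subsets of $\D$, so by Montel's theorem there is a subsequence, which I relabel as $\{f_k\}$, converging uniformly on compact subsets to a holomorphic function $f$. By Fatou's lemma applied to $|f_k|^2 h$, one gets $\|f\|_{L^2(\D,h)}\leq \liminf_k \|f_k\|_{L^2(\D,h)}\leq 1$, so $f\in A^2(\D,h)$ and in particular $\|f_k-f\|_{L^2(\D,h)}\leq 2$ for all $k$.

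Next I would split the $L^2(\D,h)$-norm of $M_\phi(f_k-f)$. Fix $\ep>0$. Because $\phi\in C(\Dc)$ with $\phi=0$ on $b\D$, there is a compact set $K\subset\D$ with $|\phi|<\ep$ on $\D\setminus K$. Then
\[
\|M_\phi(f_k-f)\|_{L^2(\D,h)}^2 = \int_{K}|\phi|^2|f_k-f|^2h\,dV + \int_{\D\setminus K}|\phi|^2|f_k-f|^2h\,dV.
\]
On $\D\setminus K$ the second integral is at most $\ep^2\|f_k-f\|_{L^2(\D,h)}^2\leq 4\ep^2$. For the first integral, $f_k\to f$ uniformly on the compact set $K$, while $|\phi|^2 h$ is bounded on $K$ and $V(K)<\infty$, so this term tends to $0$ as $k\to\infty$. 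Therefore $\limsup_{k}\|M_\phi(f_k-f)\|_{L^2(\D,h)}^2\leq 4\ep^2$, and since $\ep>0$ is arbitrary, $M_\phi f_k\to M_\phi f$ in $L^2(\D,h)$. This establishes compactness of $M_\phi$. I expect no further obstacle, and I would note that compactness of $M_\phi$ immediately yields compactness of both $T_\phi=PM_\phi$ and $H_\phi=(I-P)M_\phi$, which is what the subsequent corollaries require.
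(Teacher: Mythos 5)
Your proof is correct, and it rests on the same two ingredients as the paper's: the smallness of $\phi$ off a compact set $K\subset\D$ and a Montel/normal-families argument on $K$. The difference is in how the two pieces are glued together. The paper writes the one-line estimate
\[
\|M_{\phi}f\|_{L^2(\D,h)}^2 \leq \ep\|f\|_{L^2(\D,h)}^2 + \|M_{\phi\chi_K} f\|_{L^2(\D,h)}^2,
\]
observes that $M_{\phi\chi_K}$ is compact by Montel, and then invokes the abstract compactness-estimates criterion (Lemma \ref{LemCompEst}) to conclude. You instead verify sequential compactness by hand: you extract a locally uniformly convergent subsequence (justifying local boundedness of $K^h(z,z)$ via \eqref{CauchySchwarz} and the positivity of $h$, which is correct), identify the limit $f$, check $f\in A^2(\D,h)$ by Fatou, and then run the same $K$ versus $\D\setminus K$ splitting on $f_k-f$. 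Your route is self-contained --- it does not need Lemma \ref{LemCompEst} at all --- at the cost of being longer and requiring the extra step of showing the Montel limit lies in $A^2(\D,h)$; the paper's route outsources exactly that bookkeeping to the compactness-estimates lemma, which it needs anyway later (in the proof of Lemma \ref{compactnesshankel}). Your closing remark that compactness of $M_\phi$ gives compactness of $T_\phi$ and $H_\phi$ matches the observation the paper makes right after the lemma.
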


\begin{proof}
Since $\phi = 0$ on $b\D$, for $\ep > 0$ there exists $K$ precompact in $\D$ 
such that $|\phi|^2 < \ep$ on $\D \setminus K$. Then, for $f \in A^2(\D,h)$ we have
\begin{align*}
\|M_{\phi}f\|_{L^2(\D,h)}^2  
= &\, \int_{\D \setminus K} |\phi f|^2hdV + \int_K |\phi f|^2hdV \\
\leq &\, \ep\|f\|_{L^2(\D,h)}^2 + \|M_{\phi\chi_K} f\|_{L^2(\D,h)}^2. 
\end{align*}
Since (by Montel's Theorem) $M_{\phi\chi_K}$ is compact on $A^2(\D,h)$, the inequality 
above shows that $M_{\phi}$ satisfies the compactness estimates in Lemma \ref{LemCompEst} 
below. Therefore, $M_{\phi}$ is compact on $A^2(\D,h)$.
\end{proof}

We note that if the multiplication operator $M_{\phi}$ is compact on $A^2(\D,h)$ then 
the Toeplitz operator  $T_{\phi}=PM_{\phi}$ is compact as well. Hence the following 
corollary is an immediate consequence of the lemma above.

\begin{corollary}\label{CorMultToeplitz}
Let $\D$ be a bounded domain in $\C^n$ and $h:\D\to (0,\infty)$ be a 
continuous function. Assume that $\phi\in C(\Dc)$ such that $\phi=0$ on $b\D$. 
Then the Toeplitz operator $T_{\phi}$ is compact on $A^2(\D,h)$. 
\end{corollary}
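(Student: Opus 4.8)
The plan is to obtain this statement as an immediate consequence of Lemma \ref{LemCompact} together with the ideal property of compact operators. The hypotheses of the corollary are identical to those of Lemma \ref{LemCompact}, so the genuine analytic content — namely that the multiplication operator $M_{\phi}$ is compact on $A^2(\D,h)$ whenever $\phi\in C(\Dc)$ vanishes on $b\D$ — has already been secured. What remains is only the functional-analytic step of transferring compactness from $M_{\phi}$ to $T_{\phi}$.

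First I would recall the factorization $T_{\phi}=PM_{\phi}$, where $P:L^2(\D,h)\to A^2(\D,h)$ is the Bergman projection. Since $A^2(\D,h)$ is a closed subspace of $L^2(\D,h)$, the orthogonal projection $P$ is a bounded (indeed contractive) linear operator. By Lemma \ref{LemCompact}, $M_{\phi}$ is compact on $A^2(\D,h)$. Because the compact operators form a two-sided ideal in the algebra of bounded operators, the composition of the bounded operator $P$ with the compact operator $M_{\phi}$ is again compact. Hence $T_{\phi}=PM_{\phi}$ is compact on $A^2(\D,h)$, which is the desired conclusion.

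I do not expect any genuine obstacle in this argument; the difficulty has been absorbed entirely into the proof of Lemma \ref{LemCompact}, which splits the norm of $M_{\phi}f$ over a precompact set $K$ and its complement, uses the smallness of $\phi$ near $b\D$ to control the exterior piece, and invokes Montel's theorem to handle the compactly supported interior piece. Consequently the corollary itself requires no further estimates and follows in a single line from the preceding lemma.
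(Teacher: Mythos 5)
Your proposal is correct and matches the paper exactly: the paper derives this corollary from Lemma \ref{LemCompact} by the same one-line observation that $T_{\phi}=PM_{\phi}$ is the composition of the bounded Bergman projection with the compact operator $M_{\phi}$, hence compact.
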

The following corollary is a consequence of the corollary above and Theorem \ref{ThmFunc}. 
\begin{corollary}\label{CorMult}
Let $\D$ be a bounded pseudoconvex domain in $\C^n$ with Lipschitz boundary 
and a defining function $\rho$. Assume that $r\geq 0$ is a real number and 
$\phi \in C(\Dc)$. Then the following are equivalent
\begin{itemize}
\item[i.] the multiplication operator $M_\phi$ is compact on $A^2(\D,|\rho|^r)$,
\item[ii.] the Toeplitz operator $T_{\phi}$ is compact on $A^2(\D,|\rho|^r)$,
\item[iii.] $\phi=0$ on $b\D$. 
\end{itemize}
\end{corollary}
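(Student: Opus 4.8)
The plan is to establish the equivalence by proving a cycle of three implications, $(iii)\Rightarrow(i)\Rightarrow(ii)\Rightarrow(iii)$, each of which is already furnished by the results assembled above. Since the corollary is explicitly advertised as a consequence of Corollary \ref{CorMultToeplitz} (together with Lemma \ref{LemCompact}) and Theorem \ref{ThmFunc}, the strategy is simply to organize these pieces into a closed chain.

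For $(iii)\Rightarrow(i)$ I would apply Lemma \ref{LemCompact} with the weight $h=|\rho|^r$. One first checks that this weight fits the hypotheses: since $\rho<0$ on $\D$ we have $|\rho|>0$ there, so $|\rho|^r$ is a continuous, strictly positive function $\D\to(0,\infty)$ for every $r\geq 0$ (the case $r=0$ giving $h\equiv 1$). With $\phi\in C(\Dc)$ vanishing on $b\D$, Lemma \ref{LemCompact} then yields compactness of $M_\phi$ on $A^2(\D,|\rho|^r)$ directly.

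The implication $(i)\Rightarrow(ii)$ is the routine observation recorded immediately after Lemma \ref{LemCompact}: because $T_\phi=PM_\phi$ with $P$ the bounded Bergman projection, the composition of a compact operator with a bounded one is compact, so compactness of $M_\phi$ forces compactness of $T_\phi$. Finally, $(ii)\Rightarrow(iii)$ is exactly the content of Theorem \ref{ThmFunc}, whose standing assumptions (bounded pseudoconvex $\D$ with Lipschitz boundary, $\phi\in C(\Dc)$, weight $|\rho|^r$ with $r\geq 0$) match those of the corollary verbatim. Chaining the three implications closes the loop and gives the full equivalence of (i), (ii), and (iii).

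I expect no genuine obstacle at this stage: all three links are either immediate or cited, and the entire substantive difficulty has already been absorbed into Theorem \ref{ThmFunc}, which supplies the only nontrivial direction $(ii)\Rightarrow(iii)$. The sole point warranting a word of care is the verification that $h=|\rho|^r$ is admissible as a weight in Lemma \ref{LemCompact}, i.e.\ that it is continuous and positive on $\D$; this is the reason for the hypothesis $r\geq 0$ and is checked in one line as above.
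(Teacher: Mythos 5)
Your proposal is correct and follows exactly the route the paper intends: the cycle $(iii)\Rightarrow(i)$ via Lemma \ref{LemCompact} with $h=|\rho|^r$, $(i)\Rightarrow(ii)$ via $T_\phi=PM_\phi$, and $(ii)\Rightarrow(iii)$ via Theorem \ref{ThmFunc}. The paper leaves the corollary as an immediate consequence of these same ingredients, so there is nothing to add.
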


\section{Proof of Theorem \ref{ThmFunc}} \label{SecProofThmMain}
We denote the Bergman kernel for $A^2(\D,|\rho|^r)$ by $K^{r,\rho}$ and 
\[k_z^{r,\rho}(w)=\dfrac{K^{r,\rho}(w,z)}{\sqrt{K^{r,\rho}(z,z)}}\]  
denotes the corresponding normalized Bergman  kernel. When we need to be specific 
about the domain, we will denote the Bergman kernel by $K_{\D}^{r,\rho}$. 

Let $T:A^2(\D,|\rho|^r)\to A^2(\D,|\rho|^r)$ be a bounded linear operator. 
Then the Berezin transform $\widetilde{T}(z)$ of $T$ at $z\in \D$ is defined as 
\[\widetilde{T}(z)=\langle Tk_z^{r,\rho},k_z^{r,\rho} \rangle_{L^2(\D,|\rho|^r)}\] 
where $\langle \cdot, \cdot \rangle_{L^2(\D,|\rho|^r)}$ is the inner product on 
$L^2(\D,|\rho|^r)$. For $\phi \in L^{\infty}(\D)$, we denote 
$\widetilde{\phi}=\widetilde{T_{\phi}}$.

We first prove a result similar to Lemma \ref{LemWeakLip} for the normalized 
Bergman kernel $k_z^{r,\rho}$ of the weighted Bergman space $A^2(\D, |\rho|^r)$ 
as shown in Lemma \ref{LemWeakConv} below. In the proof, we use the notion 
of inflated domain over the base domain $\D$ (see \cite{ForelliRudin1974}  
and \cite{Ligocka89}).  We define the inflated domain as 
\begin{align}\label{EqnInflatedDomain} 
\D_{r}^m=\{(z,w) \in \D \times \C^m:\rho(z) + |w_1|^{\frac{2m}{r}} + \cdots 
	+ |w_m|^{\frac{2m}{r}} < 0\}.
\end{align}
We denote the trivial extension of $f \in A^2(\D,|\rho|^r)$ to $\D_{r}^m$ by $F$. Namely, 
$F\in A^2(\D_{r}^m)$ such that $f(z)=F(z,w)$ for $(z,w)\in \D_{r}^m$.

\begin{lemma}\label{LemWeakConv}
Let $\D$ be a bounded pseudoconvex domain in $\C^n$ with Lipschitz boundary 
and a defining function $\rho$. Assume that $r\geq 0$ is a real number and $p \in b\D$. 
Then $k_{z}^{r,\rho} \rightarrow 0$ weakly as $z \rightarrow p$. 
\end{lemma}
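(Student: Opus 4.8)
The plan is to push the weak convergence already established in Lemma \ref{LemWeakLip} for the \emph{unweighted} Bergman space up to the weighted space $A^2(\D,|\rho|^r)$, by realizing the latter, up to a multiplicative constant, isometrically inside the unweighted Bergman space of the inflated domain $\D_r^m$ of \eqref{EqnInflatedDomain}. If $r=0$ the weight is trivial and the assertion is exactly Lemma \ref{LemWeakLip}, so I would assume $r>0$ and fix an integer $m$ with $2m/r\geq 1$. This makes the fiber exponent at least $1$, so $(z,w)\mapsto\rho(z)+\sum_j|w_j|^{2m/r}$ is Lipschitz; combined with $\D$ being bounded pseudoconvex with Lipschitz boundary, this should give that $\D_r^m$ is a bounded pseudoconvex domain with Lipschitz boundary, so Lemma \ref{LemWeakLip} applies to it. Since $\rho(p)=0$, the point $(p,0)$ lies in $b\D_r^m$, and $(z,0)\to(p,0)$ as $z\to p$ through $\D$.

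The heart is a kernel comparison. For fixed $z\in\D$ the fiber $\{w\in\C^m:\sum_j|w_j|^{2m/r}<|\rho(z)|\}$ is a bounded complete Reinhardt domain, and the substitution $w=|\rho(z)|^{r/2m}u$ shows its volume is $c_m|\rho(z)|^r$, where $c_m>0$ is the volume of $\{u\in\C^m:\sum_j|u_j|^{2m/r}<1\}$. Using the extremal characterization \eqref{maximum}, I would prove
\[K_{\D}^{r,\rho}(z,z)=c_m\,K_{\D_r^m}\big((z,0),(z,0)\big).\]
For ``$\leq$'': the trivial extension $F$ of $f\in A^2(\D,|\rho|^r)$ satisfies $F(z,0)=f(z)$ and $\|F\|^2_{L^2(\D_r^m)}=c_m\|f\|^2_{L^2(\D,|\rho|^r)}$ (Fubini and the volume computation), so a normalized $F$ is a competitor for $K_{\D_r^m}((z,0),(z,0))$. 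For ``$\geq$'': for $F\in A^2(\D_r^m)$ the slice $w\mapsto F(z,w)$ is holomorphic on the Reinhardt fiber, so the mean value property over that fiber together with Cauchy--Schwarz gives
\[|F(z,0)|^2\leq\frac{1}{c_m|\rho(z)|^r}\int_{\{\sum_j|w_j|^{2m/r}<|\rho(z)|\}}|F(z,w)|^2\,dV(w),\]
and integrating against $|\rho(z)|^r$ (Fubini again) shows $z\mapsto F(z,0)$ lies in $A^2(\D,|\rho|^r)$ with norm at most $c_m^{-1/2}\|F\|_{L^2(\D_r^m)}$, producing a competitor for $K_{\D}^{r,\rho}(z,z)$.

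To finish, I would take any $g\in A^2(\D,|\rho|^r)$ with trivial extension $G\in A^2(\D_r^m)$, so $G(z,0)=g(z)$. The reproducing property on $\D_r^m$ gives
\[\Big\langle G,\frac{K_{\D_r^m}(\cdot,(z,0))}{\sqrt{K_{\D_r^m}((z,0),(z,0))}}\Big\rangle_{L^2(\D_r^m)}=\frac{g(z)}{\sqrt{K_{\D_r^m}((z,0),(z,0))}},\]
and since $(z,0)\to(p,0)\in b\D_r^m$, Lemma \ref{LemWeakLip} applied on $\D_r^m$ forces this quantity to tend to $0$. Dividing by the constant $\sqrt{c_m}$ and invoking the kernel identity converts this into $g(z)/\sqrt{K_{\D}^{r,\rho}(z,z)}=\langle g,k_z^{r,\rho}\rangle_{L^2(\D,|\rho|^r)}\to 0$, which is precisely the claimed weak convergence.

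I expect the main obstacle to be the regularity of the inflated domain rather than the kernel bookkeeping. The Lipschitz property is forced by $2m/r\geq 1$, but pseudoconvexity of $\D_r^m$ is governed not by $\rho$ directly but by $-\log(-\rho)$: writing $\D_r^m$ as the sublevel set $\{\log\sum_j|w_j|^{2m/r}-\log(-\rho(z))<0\}$ exhibits it as a sublevel set of a plurisubharmonic function \emph{provided} $-\log(-\rho)$ is plurisubharmonic, which need not hold for an arbitrary Lipschitz defining function. I would circumvent this by replacing $\rho$ with a comparable defining function for which it does hold, e.g. $-\delta_{\D}$, using that $-\log\delta_{\D}$ is plurisubharmonic exactly because $\D$ is pseudoconvex; since comparable weights $|\rho|^r\asymp\delta_{\D}^r$ give the same space with comparable norms and hence comparable Bergman kernels, the weak convergence of the normalized kernels is unaffected by this substitution.
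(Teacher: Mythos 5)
Your overall strategy---inflate the domain, apply Lemma \ref{LemWeakLip} to the inflated domain, and pull the weak convergence back via the fiber-volume and mean-value computation---is exactly the paper's, and your kernel bookkeeping is correct: the isometry $\|F\|^2_{L^2(\D_r^m)}=c_m\|f\|^2_{L^2(\D,|\rho|^r)}$, the diagonal identity $K_{\D}^{r,\rho}(z,z)=c_m K_{\D_r^m}((z,0),(z,0))$ obtained from the extremal characterization \eqref{maximum} plus the mean value property over the Reinhardt fibers, and the final reproducing-property step all match what the paper does (the paper writes the inner product out against the kernel and cites the Forelli--Rudin/Ligocka-type computation of \cite[Proposition 8]{CucukovicSahutogluZeytunuc18}, but this is the same content). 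You also correctly identify the pseudoconvexity obstruction, and your fix---replace $\rho$ by the boundary distance so that Oka's lemma makes $-\log|d_{\D}|$ plurisubharmonic and the Hartogs-type sublevel set pseudoconvex---is sound.

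The gap is the Lipschitz regularity of the inflated domain, which you must have in order to invoke Lemma \ref{LemWeakLip} on it, and which you dispatch with ``the defining function is Lipschitz, so the domain is Lipschitz.'' That implication is false: a Lipschitz domain is one that is locally the region below the graph of a Lipschitz function (equivalently, satisfies a uniform cone condition), and a domain can admit a globally Lipschitz defining function without being Lipschitz (e.g.\ $\{|y|<|x|\}$ near the origin is cut out by the Lipschitz function $|y|-|x|$). Your pseudoconvexity fix compounds the problem: after replacing $\rho$ by $d_{\D}$ the inflated domain becomes $\{(z,w):\sum_j|w_j|^{2m/r}<|d_{\D}(z)|\}$, and near a point of $b\D\times\{0\}$ you would need to show that the super-level sets $\{|d_{\D}|>t\}$ form a family of Lipschitz graphs depending Lipschitz-continuously on $t$, jointly with the base variables---a nontrivial property of the distance function that you never verify. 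This is precisely where the paper spends its effort: it uses \cite[Theorem 1.1]{Harrington2008} and \cite[Proposition 3.1]{Harrington2022} to manufacture a Lipschitz defining function $\widehat{\rho}$ for which $-(-\widehat{\rho})^{\eta}$ is strictly plurisubharmonic, so that the inflated domain is simultaneously a sublevel set of a plurisubharmonic function (hence pseudoconvex) and, locally, of the form $\{y_n<\varphi(z',x_n)-\lambda(w)\}$ with $\lambda$ Lipschitz (hence a Lipschitz domain). Until you supply an argument that your $d_{\D}$-based inflated domain has Lipschitz boundary, Lemma \ref{LemWeakLip} cannot be applied to it and the proof is incomplete.
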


\begin{proof} 
First, we will produce a bounded pseudoconvex domain with Lipschitz boundary inflated 
over $\D$.  The domain in \eqref{EqnInflatedDomain} is bounded but not necessarily 
Lipschitz  or pseudoconvex, in general. We construct the inflated domain $\widehat{\D}$ 
as follows:  Since $\D$ is a bounded Lipschitz pseudoconvex domain, we use 
\cite[Theorem 1.1]{Harrington2008} to choose a strictly plurisubharmonic 
function $\rho_0$ and $0 < \eta_0 \leq 1\leq C$ such that 
$\frac{1}{C}|d_{\D}|^{\eta_0}<-\rho_0<C|d_{\D}|^{\eta_0}$ on $\D$ where $d_{\D}$ is the negative 
boundary distance function. Then \cite[Proposition 3.1]{Harrington2022} 
implies that for $0<\eta<\eta_0$ we can choose a Lipschitz defining function 
$\widehat{\rho}$ for $\D$ such that $\widehat{\rho}\in C^{\infty}(\D)$
 and $-(-\widehat{\rho})^{\eta}$ is strictly plurisubharmonic on $\D$.

Let $s=r/\eta \geq 0$ and $m \in \mathbb{N}$ such that $s \leq m$. 
With this, we define the inflated domain 
\[\widehat{\D}=\{(z,w) \in \D \times \C^m: -(-\widehat{\rho}(z))^{\eta} 
+ |w_1|^{\frac{2m}{s}} + \cdots + |w_m|^{\frac{2m}{s}}  < 0\}.\] 
Then, $\widehat{\D}$ is a bounded pseudoconvex domain in $\C^{n+m}$. 
Next we want to show that $\widehat{\D}$ has a Lipschitz boundary as well. 

We observe that,
\begin{align*}
\widehat{\D} =& \{(z,w) \in \D \times \C^m: -(-\widehat{\rho}(z))^{\eta}   
	+ |w_1|^{\frac{2m}{s}} + \cdots + |w_m|^{\frac{2m}{s}}  < 0\} \\
=& \{(z,w) \in \D \times \C^m: \widehat{\rho}(z) + \lambda(w) < 0\}
\end{align*}	
where 
\[\lambda(w) = \left(|w_1|^{\frac{2m}{s}} + \cdots + |w_m|^{\frac{2m}{s}}\right)^{1/\eta}.\] 

One can check that $\lambda$ has uniformly bounded first order partial derivatives 
on  $\widehat{\D}$. Then $\lambda$ is a Lipschitz function. Hence $\widehat{\D}$ has 
Lipschitz boundary away from the boundary of $\D$. On the other hand, for a boundary 
point $p\in b\D$ (after a holomorphic linear change of coordinates, if necessary) there 
exist an open neighborhood $U$ of $p$ in $\C^n$ and a Lipschitz function $\varphi$ such that 
$\widehat{\rho}(z)=y_n-\varphi(z',x_n)$ on $U$ where $z'=(z_1,\ldots, z_{n-1})$ 
and $z_n=x_n+iy_n$. Then 
\[U\cap \D=\{z\in U: y_n<\varphi(z',x_n)\}\]  
and 
\[(U\times \C^m)\cap \widehat{\D}
	= \left\{(z,w)\in U\times \C^m: y_n<\varphi(z',x_n)-\lambda(w)\right\}.\]
Hence, $\widehat{\D}$ has Lipschitz boundary near the boundary of $\D$. 
Therefore, $\widehat{\D}$ is a bounded pseudoconvex 
domain in $\C^{n+m}$ with Lipschitz boundary and, by Lemma \ref{LemWeakLip}, we have 
$k_{(z,w)}^{\widehat{\D}} \rightarrow 0$ weakly as $(z,w) \rightarrow b\widehat{\D}$. 

Let 
\[\D_z=\left\{w\in \C^m:  |w_1|^{\frac{2m}{s}} + \cdots + |w_m|^{\frac{2m}{s}} 
	<|\widehat{\rho}(z)|^{\eta}\right\}\] 
be the fiber over $z\in \D$. For any $z\in \D$,  using the change of coordinates 
\[\widetilde{w}_j=|\widehat{\rho}(z)|^{-r/{2m}}w_j,\]   
one can show that the volume of $\D_z$ is 
\[V(\D_z)=c_{m,s}|\widehat{\rho}(z)|^r\]
where 
\[c_{m,s}=\int_{|\widetilde{w}_1|^{\frac{2m}{s}}+ \cdots + |\widetilde{w}_m|^{\frac{2m}{s}} 
< 1}dV(\widetilde{w}).\] 
Since $\widehat{\D}$ is a Hartogs domain over the base $\D$, following a similar 
argument as in the proof of \cite[Proposition 8]{CucukovicSahutogluZeytunuc18}, 
we derive
\begin{align*}
 K_{\D}^{r,\widehat{\rho}}(\xi,z)=c_{m,s}K_{\widehat{\D}}(\xi,0;z,0). 
\end{align*}  
Then for $f\in A^2(\D,|\widehat{\rho}|^r)$ and $F \in A^2(\widehat{\D})$ related 
to $f$ as described after \eqref{EqnInflatedDomain} we have
\begin{align*} 
 \langle F, k_{(z,0)}^{\widehat{\D}} \rangle_{L^2(\widehat{\D})} 
=& \frac{1}{\sqrt{K_{\widehat{\D}}(z,0;z,0)}} \int_{\xi\in\D} f(\xi)  
	\int_{\tau\in\D_{\xi}} \overline{K_{\widehat{\D}}(\xi,\tau;z,0)}dV(\tau)dV(\xi)\\
=& \frac{1}{\sqrt{K_{\widehat{\D}}(z,0;z,0)}} \int_{\xi\in\D} f(\xi) 
\overline{K_{\widehat{\D}}(\xi,0;z,0)} V(\D_{\xi}) dV(\xi)\\
=& \frac{c_{m,s}}{\sqrt{K_{\widehat{\D}}(z,0;z,0)}} \int_{\xi\in\D} f(\xi) 
\overline{K_{\widehat{\D}}(\xi,0;z,0)} |\widehat{\rho}(\xi)|^r dV(\xi)\\
=&\sqrt{c_{m,s}} \langle f, k_{z}^{r,\widehat{\rho}} \rangle_{L^2(\D, |\widehat{\rho}|^r)}. 
\end{align*} 
Since $k_{(z,w)}^{\widehat{\D}} \rightarrow 0$ 
weakly as $(z,w) \rightarrow b\widehat{\D}$, the equality above shows that 
$k_{z}^{r,\widehat{\rho}} \rightarrow 0$ weakly as $z \rightarrow p$. Lastly, since 
$\widehat{\rho}$ and $\rho$ are comparable on $\Dc$, by 
\cite[Lemma 17]{CucukovicSahutogluZeytunuc18} we conclude that 
$k_{z}^{r,\rho} \rightarrow 0$ weakly as $z \rightarrow p$.
\end{proof}

Aside from the weak convergence of the normalized Bergman kernel, a crucial ingredient in 
proving Theorem \ref{ThmFunc} is the use of localization of the Bergman kernel  
(see \cite{Ohsawa84}). We prove a version of this localization 
result for $A^2(\D,|d_{\D}|^r)$ where $d_{\D}$ is the negative boundary distance function. 

\begin{theorem}\label{ThmLocal}
Let $\D$ be a bounded pseudoconvex domain in $\C^n$. Assume that 
$r\geq 0$ is a real number and $U_1 \Subset U_2$ are open sets in $\C^n$ such that 
$\D\cap U_2\neq\emptyset$. Then there exists $C>0$ such that for any connected 
component $V$ of $\D \cap U_2$ we have  
\[K_{V}^{r,d_{\D}}(z,z) \leq C K_{\D}^{r,d_{\D}}(z,z)\]
for $z \in V \cap U_1$. 
\end{theorem}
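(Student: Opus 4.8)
The plan is to prove the nontrivial inequality $K_V^{r,d_\D}(z,z)\le C\,K_\D^{r,d_\D}(z,z)$ by manufacturing, for each $z\in V\cap U_1$, a \emph{global} holomorphic function on $\D$ that agrees at $z$ with the $V$-extremal function and whose weighted norm over $\D$ is controlled by the weighted norm over $V$, with a constant independent of $z$ and of the component $V$. First I would take $f$ to be the minimal-norm element of $A^2(V,|d_\D|^r)$ with $f(z)=1$, so that $\|f\|_{L^2(V,|d_\D|^r)}^2=1/K_V^{r,d_\D}(z,z)$ by \eqref{maximum} (such $f$ exists since $|d_\D|^r$ is bounded, so $A^2(V,|d_\D|^r)$ contains the constants). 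Next I would fix, once and for all, a cutoff $\chi\in C_c^\infty(U_2)$ with $\chi\equiv 1$ on a neighborhood of $\overline{U_1}$, so that $\supp\dbar\chi$ lies in a fixed compact subset of $U_2\setminus\overline{U_1}$. I would record the elementary topological fact that, since $V$ is a connected component of $\D\cap U_2$, one has $\partial V\cap\D\subset\partial U_2$; because $\chi$ vanishes near $\partial U_2$, the product $\chi f$ extends by zero to a smooth function, compactly supported in $\D$, with $\dbar(\chi f)=f\,\dbar\chi=:\alpha$ a $\dbar$-closed $(0,1)$-form supported in $V\cap\supp\dbar\chi$, in particular away from $z$.

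The heart of the matter is to correct $\chi f$ to a holomorphic function by solving $\dbar u=\alpha$ on $\D$ with two simultaneous requirements: a weighted $L^2$ bound with weight $|d_\D|^r$, and the vanishing $u(z)=0$. For this I would run Hörmander's $L^2$ estimate on the pseudoconvex domain $\D$ with the plurisubharmonic weight
\[
\Phi(\zeta)=\varphi_0(\zeta)+\varphi_1(\zeta)+A|\zeta|^2,\qquad \varphi_0=-r\log(-d_\D),
\]
where $A>0$ is large. The key structural observation is that $e^{-\varphi_0}=|d_\D|^r$ is exactly the desired weight, and that $\varphi_0$ is plurisubharmonic \emph{precisely because} $\D$ is pseudoconvex (minus the logarithm of the boundary distance is plurisubharmonic on a pseudoconvex domain). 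The term $\varphi_1(\zeta)=2n\,\theta(\zeta)\log|\zeta-z|$, with $\theta$ a bump equal to $1$ near $z$ and supported in a ball $B(z,\delta_0)$ of fixed radius $\delta_0$ chosen so that $B(z,\delta_0)\cap\supp\dbar\chi=\emptyset$, produces a nonintegrable singularity of order $|\zeta-z|^{-2n}$ at $z$. Finally $A|\zeta|^2$ is added so that $i\partial\dbar\Phi$ dominates a fixed positive multiple of the Euclidean form on $\supp\alpha$, absorbing the bounded curvature error from the cutoff $\theta$; boundedness of $\D$ keeps $A|\zeta|^2$ uniformly bounded.

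With this weight, Hörmander's inequality yields $u$ with $\dbar u=\alpha$ and $\int_\D|u|^2e^{-\Phi}\le\tfrac1A\int_{\supp\alpha}|\alpha|^2e^{-\Phi}$. Since $\theta$ and $|\zeta|^2$ are bounded above on $\D$, I would convert this into $\|u\|_{L^2(\D,|d_\D|^r)}^2\le C\,\|f\|_{L^2(V,|d_\D|^r)}^2$, where $C$ depends only on $\chi,\delta_0,A,n$ and $\operatorname{diam}\D$; on $\supp\alpha$ the pole term is absent ($\theta=0$ there), which is what keeps the right-hand side comparable to the $V$-norm of $f$. Because $\alpha$ vanishes near $z$, the solution $u$ is holomorphic on a small ball about $z$, and finiteness of $\int|u|^2e^{-\Phi}$ against the nonintegrable factor $|\zeta-z|^{-2n}$ forces $u(z)=0$. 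Setting $g=\chi f-u$ gives $g\in A^2(\D,|d_\D|^r)$ holomorphic on $\D$ with $g(z)=\chi(z)f(z)-u(z)=1$ and $\|g\|_{L^2(\D,|d_\D|^r)}^2\le 2\|f\|_{L^2(V,|d_\D|^r)}^2+2\|u\|_{L^2(\D,|d_\D|^r)}^2\le C'\|f\|_{L^2(V,|d_\D|^r)}^2$. The extremal characterization \eqref{maximum} then gives $K_\D^{r,d_\D}(z,z)\ge|g(z)|^2/\|g\|^2\ge 1/(C'\|f\|^2)=K_V^{r,d_\D}(z,z)/C'$, which is the claim.

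\textbf{Main obstacle.} The delicate point is not any single estimate but the \emph{uniformity} of $C$ over all components $V$ and all $z\in V\cap U_1$: I must check that $\chi$ can be chosen depending only on $U_1\Subset U_2$, that $\delta_0$ is bounded below independently of $z$ (which follows from $\operatorname{dist}(\overline{U_1},\supp\dbar\chi)>0$), and that $A$ together with the upper bounds on $\varphi_1$ and $|\zeta|^2$ are $z$-independent. A secondary technical nuisance is that $d_\D$ is only Lipschitz for a general bounded pseudoconvex $\D$, so $\varphi_0$ is plurisubharmonic merely in the sense of currents; I would handle this in the standard way, either by a plurisubharmonic regularization of $\varphi_0$ (noting $i\partial\dbar\varphi_0\ge 0$, so it only helps the curvature lower bound) or by invoking Hörmander's theorem in its form valid for singular plurisubharmonic weights.
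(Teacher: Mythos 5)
Your proposal is correct and follows essentially the same route as the paper: take the extremal function of $V$ at $z$, cut it off with a fixed $\chi\in C_c^\infty(U_2)$ equal to $1$ near $\overline{U_1}$, and solve $\dbar u=f\,\dbar\chi$ via H\"ormander with the weight $2n\log|\zeta-z|-r\log|d_{\D}(\zeta)|$, whose nonintegrable pole forces $u(z)=0$ while the $-r\log|d_\D|$ term produces exactly the $|d_\D|^r$-weighted bound. The only (harmless) deviation is that you truncate the logarithmic pole with $\theta$ and compensate with $A|\zeta|^2$, whereas the paper keeps $2n\log|\zeta-z|$ globally — it is plurisubharmonic on all of $\D$ (and $-\log|d_\D|$ is a genuine continuous plurisubharmonic function by Oka's lemma, not merely a current), so neither correction is needed.
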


\begin{proof}
Let $\chi: \C^n \rightarrow [0,1]$ be a $C^{\infty}$-smooth function such that $\chi=1$ in 
some neighborhood $\widetilde{U}_1$ of $\overline{U}_1$ and $\supp(\chi) \subset U_2$. 
We fix a connected component $V \subset \D \cap U_2$ and a point $z_{0} \in V \cap U_1$. 

By equation \eqref{maximum}, there exists an $f \in A^2(V,|d_{\D}|^r)$ such that  
$|f(z_0)|^2=K_V^{r,d_{\D}}(z_0,z_0)$ and $\|f\|_{L^2(V, |d_{\D}|^r)}=1$. In fact, 
$f=k_{V,z_0}^{r,d_{\D}}$. We extend $f=0$ trivially on $\D \setminus V$ and	define 
\[\alpha=
\begin{cases}
f\dbar\chi & \text{on } U_2 \cap \D \\
0 &  \text{on }  \D \setminus U_2   
\end{cases}. \]
We observe that $\alpha$ is a $C^{\infty}$-smooth $\dbar$-closed $(0,1)$-form 
on $\D$.  By Oka's Lemma  (see \cite[Theorem 3.4.10]{ChenShawBook}), the 
function $-\log|d_{\D}(z)|$ is plurisubharmonic on $\D$. Hence 
\[\psi(z)= 2n\log|z-z_0|-r\log|d_{\D}(z)|\] 
 is plurisubharmonic as well. 

Now, there exist $C_1,C_2 > 0$, independent of $f$ and $V$, such that
\begin{align*}
\int_{\D}|\alpha(z)|^2e^{-\psi(z)}dV(z) 
\leq C_1\int_{(\D\cap U_2)\setminus \widetilde{U}_1}|z-z_0|^{-2n}|f(z)|^2|d_{\D}(z)|^rdV(z) 
\leq C_2.
\end{align*}   
By \cite[Theorem 2.2.3]{Hormander65} and the previous inequality, we can find 
$g \in C^{\infty}(\D)$ such that $\dbar g = \alpha$ and 
\begin{align}\label{estimatez_0}
\int_{\D}|g(z)|^2e^{-\psi(z)}dV =\int_{\D}|g(z)|^2|z-z_0|^{-2n}|d_{\D}(z)|^rdV(z)\leq C_3
\end{align}
where $C_3$ depends on $C_2$ and the diameter of $\D$. We define 
$\widehat{f}=\chi f -g$. Then $\widehat{f}$ is holomorphic on $\D$ because 
$\dbar \widehat{f}=\dbar(\chi f) - \dbar g=0$. Also, by equation \eqref{estimatez_0}, 
we have $g(z_0)=0$ (otherwise the integral is not finite) which implies 
that $\widehat{f}(z_0)=f(z_0)$. Furthermore,
\begin{align*}
\|\widehat{f}\|_{L^2(\D,|d_{\D}|^r)} 
\leq \|f\|_{L^2(V,|d_{\D}|^r)} + \|g\|_{L^2(\D,|d_{\D}|^r)} 
\leq 1 + \text{diam}(\D)^n\sqrt{C_3}.
\end{align*} 
By equation \eqref{CauchySchwarz}, we have 
\[K_{V}^{r,d_{\D}}(z_0,z_0) = |f(z_0)|^2= |\widehat{f}(z_0)|^2 
\leq \|\widehat{f}\|_{L^2(\D,|d_{\D}|^r)}^2K_{\D}^{r,d_{\D}}(z_0,z_0) 
\leq C K_{\D}^{r,d_{\D}}(z_0,z_0)\] 
where $C=(1 + \text{diam}(\D)^n\sqrt{C_3})^2$. 	Therefore, the proof of 
Theorem \ref{ThmLocal} is complete. 
\end{proof}

If $\rho$ is another Lipschitz defining function (the defining function $d_{\D}$ 
is also Lipschitz), there exist a positive function $\alpha$ and constants $C_1,C_2 >0$ 
such that $\rho=\alpha d_{\D}$ and $C_1 \leq \alpha \leq C_2$ on $\Dc$ 
(see \cite[Lemmas 0.1 and 0.2]{Shaw05}). Thus, the measures $|\rho|^rdV$ and $|d_{\D}|^rdV$ 
are comparable. That is,  $A^2(\D,|\rho|^r)$ and $ A^2(\D,|d_{\D}|^r)$ are the same as sets 
and there exists $C>0$ such that 
\[\dfrac{\|f\|_{L^2(\D,|d_{\D}|^r)}}{C} 
\leq \|f\|_{L^2(\D,|\rho|^r)} \leq C\|f\|_{L^2(\D,|d_{\D}|^r)}\]
for any $f\in A^2(\D,|\rho|^r)$. 

By \eqref{maximum}, we conclude that the Bergman kernels on the diagonal $K^{r,\rho}(z,z)$ 
and $K^{r,d_{\D}}(z,z)$ are equivalent. Namely, there exists $D > 0$ such that 
\[\dfrac{K^{r,d_{\D}}(z,z)}{D} \leq K^{r,\rho}(z,z) \leq DK^{r,d_{\D}}(z,z).\]
With this, we have the following corollary to Theorem \ref{ThmLocal}. 

\begin{corollary}\label{CorollaryLocal}
Let $\D$ be a bounded pseudoconvex domain in $\C^n$ with Lipschitz boundary 
and a defining function $\rho$. Assume that $r\geq 0$ is a real number 
and  $U_1 \Subset U_2$ are open sets in $\C^n$ such that $\D\cap U_2\neq\emptyset$.  
Then there exists $C>0$ such that for any connected component $V$ 
of $\D \cap U_2$ we have 
\[K_{V}^{r,\rho}(z,z) \leq C K_{\D}^{r,\rho}(z,z)\]
for $z \in V \cap U_1$. 
\end{corollary}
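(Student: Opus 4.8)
The plan is to deduce the statement from Theorem \ref{ThmLocal} by transferring from the defining function $\rho$ to the negative boundary distance function $d_{\D}$ and back, using the comparability of the two weights that was just established. The essential observation is that because $\rho=\alpha d_{\D}$ with $C_1\leq\alpha\leq C_2$ on all of $\Dc$, this is a \emph{pointwise} comparison of weights; it therefore restricts unchanged to any subdomain, so the diagonal comparison of Bergman kernels holds with one constant $D$ that is independent of the subdomain in question.

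First I would record the kernel comparison on the full domain: as derived just before the statement from \eqref{maximum}, there is $D>0$ (depending only on $C_1,C_2,r$) such that $K_{\D}^{r,d_{\D}}(z,z)\leq D\,K_{\D}^{r,\rho}(z,z)$ and $K_{\D}^{r,\rho}(z,z)\leq D\,K_{\D}^{r,d_{\D}}(z,z)$. Next I would observe that the very same derivation applies verbatim on any connected component $V$ of $\D\cap U_2$: since the weights $|\rho|^r$ and $|d_{\D}|^r$ stay comparable on $V$ with the same constants, the maximum characterization \eqref{maximum} of the kernel on $V$ gives $K_{V}^{r,\rho}(z,z)\leq D\,K_{V}^{r,d_{\D}}(z,z)$, again with the \emph{same} $D$.

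With these comparisons in hand, the proof is a short chain of inequalities. For $z\in V\cap U_1$ I would write
\[K_{V}^{r,\rho}(z,z)\leq D\,K_{V}^{r,d_{\D}}(z,z)\leq D\,C\,K_{\D}^{r,d_{\D}}(z,z)\leq C D^2\,K_{\D}^{r,\rho}(z,z),\]
where the middle inequality is Theorem \ref{ThmLocal} (with its constant $C$) applied to the weight $|d_{\D}|^r$, and the two outer inequalities are the kernel comparisons on $V$ and on $\D$ respectively. This yields the claim with the new constant $CD^2$.

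I do not expect a genuine obstacle, since all the analytic content—the $\dbar$-solving estimate and the localization argument—is already packaged inside Theorem \ref{ThmLocal}. The only point that requires care is verifying that the comparison constant $D$ may be chosen uniformly over all subdomains $V$; this is exactly where I would lean on the fact that $C_1\leq\alpha\leq C_2$ is a pointwise statement on $\Dc$ and hence is inherited by every subdomain with unchanged constants. Without that uniformity the constant $CD^2$ could a priori depend on $V$, defeating the purpose of the corollary.
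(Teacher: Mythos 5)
Your argument is correct and is exactly the route the paper takes: the paper's (implicit) proof consists of the paragraph preceding the corollary, where $\rho=\alpha d_{\D}$ with $C_1\leq\alpha\leq C_2$ on $\Dc$ yields the diagonal kernel comparison via \eqref{maximum}, followed by sandwiching Theorem \ref{ThmLocal}. You are also right to flag, and correctly resolve, the one point the paper leaves tacit --- that the comparison constant is uniform over the components $V$ because the pointwise bound on $\alpha$ restricts to any subdomain unchanged.
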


Now, we prove Theorem \ref{ThmFunc}.

\begin{proof}[Proof of Theorem \ref{ThmFunc}] 
Let $\D$ be a bounded pseudoconvex domain with Lipschitz boundary 
and a defining function $\rho$. If $T_{\phi}$ is compact on $A^{2}(\D,|\rho|^r)$, 
then by	Lemma \ref{LemToeplitz} with $\psi=\overline{\phi}$, the Toeplitz 
operator $T_{|\phi|^2}$ is also compact on 	$A^{2}(\D,|\rho|^r)$. 

Let $p \in b\D$. We claim that $\phi(p) = 0$. Suppose not, then there exist $r',\ep > 0$ 
such that $U = B(p,r') \cap \D$ is a domain and $|\phi(z)|^2 > \ep$ for all $z \in U$. 
Consider a sequence $\{q_{j}\} \subset U$ such that $q_{j} \rightarrow p$. 
Lemma \ref{LemWeakConv} and the fact that $T_{|\phi|^2}$ is compact imply that  
 $T_{|\phi|^2}k_{q_{j}}^{r,\rho}\to 0$ in $A^2(\D,|\rho|^r)$ as $j\to \infty$. Then 
\begin{align}\label{EquationBerezinwithweights} 
\lim_{j \rightarrow \infty} \langle T_{|\phi|^2}k_{q_{j}}^{r,\rho}, k_{q_{j}}^{r,\rho} 
\rangle_{L^2(\D,|\rho|^r)} 
= 0.  
\end{align}
On the other hand, by Corollary \ref{CorollaryLocal} and Lemma \ref{LemKernel} 
there exists $C_{U}>0$ such that
\[ \|k_{q_{j}}^{r,\rho}\|_{L^2(U,|\rho|^r)}^2 
\geq \dfrac{K_{\D}^{r,\rho}(q_{j},q_{j})}{K_{U}^{r,\rho}(q_{j},q_{j})} 
\geq C_U > 0 \text{ as } j\to\infty .\]
Thus, 			
\begin{align*}
\langle T_{|\phi|^2}k_{q_{j}}^{r,\rho}, k_{q_{j}}^{r,\rho} 
	\rangle_{L^2(\D,|\rho|^r)} 
\geq  \langle |\phi|^2k_{q_{j}}^{r,\rho}, k_{q_{j}}^{r,\rho} 
	\rangle_{L^2(U,|\rho|^r)} 
> \ep \|k^{r,\rho}_{q_{j}}\|_{L^2(U,|\rho|^r)}^2
\geq \ep C_U.
\end{align*} 
This contradicts \eqref{EquationBerezinwithweights}. Thus, $\phi =0$ on $b\D$. 
\end{proof}	

Next, we prove a localization result about compactness of Toeplitz operators 
with continuous symbols. In the corollary below, to emphasize the domain, we will 
use $T_{\phi}^U$ to denote the Toeplitz operator on $A^2(U)$ with symbol $\phi$. 
We note that the space $A^2(\D,|\rho|^r)$ is the unweighted Bergman space 
when $r=0$. 

\begin{corollary}\label{CorLocal}
Let $\D$ be a bounded pseudoconvex domain in $\C^n$ with Lipschitz boundary 
and $\phi \in C(\Dc)$. 
\begin{itemize}
\item[i.] Assume that $T_{\phi}^{\D}$ is compact on $A^2(\D),p\in b\D,$ and $r>0$ small 
enough so that $U =B(p,r) \cap \D$ is a domain. Then the operator $T_{\phi \chi}^{U}$ is 
compact on $A^2(U)$ for all $\chi \in C_{0}(B(p,r))$.
\item[ii.] Assume that for any $p \in b\D$ there exists $r > 0$ such that $U=\D\cap B(p,r)$ is a 
domain and $T_{\phi \chi}^{U}$ is compact on $A^2(U)$ for all $\chi\in C^{\infty}_0(B(p,r))$. 
Then  $T_{\phi}^{\D}$ is compact on $A^2(\D)$. 
\end{itemize} 
\end{corollary}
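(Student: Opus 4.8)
The plan is to treat the two parts separately: part (i) reduces to the already-proven Theorem~\ref{ThmFunc} together with Corollary~\ref{CorMultToeplitz}, while part (ii) is a genuine gluing argument carried out with a partition of unity and a projection-comparison inequality.

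For part (i), I would first invoke Theorem~\ref{ThmFunc} with $r=0$: since $T_{\phi}^{\D}$ is compact on $A^2(\D)$ and $\D$ is bounded pseudoconvex with Lipschitz boundary, $\phi=0$ on $b\D$. Now look at the bounded domain $U=B(p,r)\cap\D$ and the symbol $\phi\chi\in C(\overline{U})$. The boundary $bU$ splits into the portion lying in $b\D$, where $\phi=0$, and the portion lying on $\partial B(p,r)$, where $\chi=0$ because $\supp(\chi)\Subset B(p,r)$. Hence $\phi\chi=0$ on all of $bU$, and Corollary~\ref{CorMultToeplitz}, applied on the bounded domain $U$ with weight $h\equiv 1$, shows at once that $M_{\phi\chi}$, and therefore $T_{\phi\chi}^{U}=P_U M_{\phi\chi}$, is compact on $A^2(U)$. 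So (i) is just the observation that the cut-off kills the symbol on the artificial boundary, combined with two facts established earlier.

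For part (ii) I would set up a localization. Since $b\D$ is compact, cover it by finitely many balls $B(p_1,r_1),\dots,B(p_N,r_N)$ furnished by the hypothesis, put $U_i=\D\cap B(p_i,r_i)$, and fix a smooth partition of unity $\{\chi_i\}_{i=0}^N$ on $\Dc$ subordinate to $\{B(p_i,r_i)\}_{i=1}^N$ together with one extra open set $V_0\Subset\D$, arranged so that $\supp(\chi_0)$ is a compact subset of $\D$, $\chi_i\in C_0^\infty(B(p_i,r_i))$ for $i\ge 1$, and $\sum_{i=0}^N\chi_i=1$ on $\D$. Then $T_{\phi}^{\D}=\sum_{i=0}^N T_{\phi\chi_i}^{\D}$, so it suffices to show each summand is compact. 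The interior term $T_{\phi\chi_0}^{\D}=P_{\D}M_{\phi\chi_0}$ is compact by Montel's theorem exactly as in the proof of Lemma~\ref{LemCompact}, because $\phi\chi_0$ is supported in a compact subset of $\D$.

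The boundary terms $T_{\phi\chi_i}^{\D}$, $i\ge 1$, are the heart of the matter, and I would handle them via weakly null sequences, using that a bounded operator on the Hilbert space $A^2(\D)$ is compact if and only if it sends weakly null sequences to norm null sequences. Let $f_k\rightharpoonup 0$ in $A^2(\D)$ with $\|f_k\|_{A^2(\D)}\le 1$; since a bounded weakly null sequence converges to $0$ uniformly on compact subsets, the restrictions satisfy $f_k|_{U_i}\rightharpoonup 0$ in $A^2(U_i)$ with norm $\le 1$, so the assumed compactness of $T_{\phi\chi_i}^{U_i}$ gives $\|P_{U_i}(\phi\chi_i f_k)\|_{L^2(U_i)}=\|T_{\phi\chi_i}^{U_i}(f_k|_{U_i})\|_{A^2(U_i)}\to 0$. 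Writing $g_k=\phi\chi_i f_k$, which is supported in $U_i$ because $\supp(\chi_i)\subset B(p_i,r_i)$, the key step is the elementary comparison
\[\|P_{\D}g_k\|_{L^2(\D)}^2=\langle g_k,P_{\D}g_k\rangle_{L^2(\D)}=\langle P_{U_i}g_k,(P_{\D}g_k)|_{U_i}\rangle_{L^2(U_i)}\le \|P_{U_i}g_k\|_{L^2(U_i)}\,\|P_{\D}g_k\|_{L^2(\D)},\]
where the second equality uses that $g_k$ is supported in $U_i$, that $(P_{\D}g_k)|_{U_i}\in A^2(U_i)$, and the defining property of $P_{U_i}$. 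Dividing gives $\|T_{\phi\chi_i}^{\D}f_k\|_{A^2(\D)}=\|P_{\D}g_k\|_{L^2(\D)}\le\|P_{U_i}g_k\|_{L^2(U_i)}\to 0$, so $T_{\phi\chi_i}^{\D}$ is compact, and summing over $i$ finishes the proof. The main obstacle is exactly this comparison between $P_{\D}$ and $P_{U_i}$: a priori the two projections are unrelated, and the point I would emphasize is that the support condition $\supp(\phi\chi_i)\subset U_i$ lets one compare them directly—by testing $g_k$ against the holomorphic function $(P_{\D}g_k)|_{U_i}$—so that no localization-of-kernel or $\dbar$-solving estimate is needed. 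I would also check that this inequality requires no regularity or pseudoconvexity of the pieces $U_i=\D\cap B(p_i,r_i)$ (only that each is a bounded subdomain of $\D$), which is what makes the gluing robust even when these pieces have non-Lipschitz boundary.
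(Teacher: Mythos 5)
Your proof is correct. Part (i) is essentially the paper's own argument: split $bU$ into the piece on $bB(p,r)$, where $\chi$ kills the symbol, and the piece on $b\D$, where $\phi$ vanishes by Theorem \ref{ThmFunc}, then apply Corollary \ref{CorMultToeplitz}. Part (ii), however, takes a genuinely different route. The paper runs the hard direction locally: it applies Corollary \ref{CorMult} (i.e.\ Theorem \ref{ThmFunc}) on each piece $U_j=\D\cap B(p_j,r_j)$ to conclude that $\phi\chi_j=0$ on $bU_j$, sums over the partition of unity to get $\phi=0$ on $b\D$, and only then invokes the easy direction on $\D$. You instead glue the operators directly: the comparison $\|P_{\D}g\|_{L^2(\D)}\leq\|P_{U_i}g\|_{L^2(U_i)}$ for $g$ supported in $U_i$, obtained by pairing $g$ against $(P_{\D}g)|_{U_i}\in A^2(U_i)$ and using self-adjointness of $P_{U_i}$, transfers the assumed local compactness to $T^{\D}_{\phi\chi_i}$ along weakly null sequences, while the interior term $T^{\D}_{\phi\chi_0}$ is compact by Montel. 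Both arguments are complete, but yours buys something real: the paper's appeal to Corollary \ref{CorMult} on $U_j$ tacitly requires each $U_j$ to be a bounded pseudoconvex domain with Lipschitz boundary, whereas your projection comparison needs only that $U_j$ is a subdomain of $\D$, so the gluing is insensitive to the regularity and pseudoconvexity of the artificial pieces; it also never passes through the statement ``$\phi=0$ on $b\D$.'' The price is a somewhat longer argument. One cosmetic remark: the weak nullity of $f_k|_{U_i}$ in $A^2(U_i)$ follows at once from the weak-to-weak continuity of the bounded restriction operator $A^2(\D)\to A^2(U_i)$, so the detour through uniform convergence on compact subsets is unnecessary (though not wrong).
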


\begin{proof} To prove i. we first denote $b_1U =  bB(p,r) \cap \Dc$ and 
$b_2U=\overline{B(p,r)} \cap b\D$. Then $bU = b_1U \cup b_2U$. We observe 
that $\phi \chi = 0$ on $b_1U$ for all $\chi \in C_{0}(B(p,r))$ as $\chi = 0$ on $b_1U$. 
Since $T_{\phi}^{\D}$ is compact on $A^2(\D)$, Corollary \ref{CorMult} implies that $\phi = 0$ 
on $b\D$. Thus, $\phi \chi =0$ on $b_2U$. Hence, $\phi\chi = 0$ on $bU = b_1U \cup b_2U$. 
Therefore, by Corollary \ref{CorMultToeplitz}, $T_{\phi \chi}^{U}$ is compact on $A^2(U)$.

To prove ii. we cover $b\D$ by finitely many balls $\{B(p_j,r_j):j=1,\ldots,m\}$ such that 
each $p_j\in b\D$ and $r_j>0$ guaranteed by the hypothesis of ii.  
We choose a $C^{\infty}$-smooth partition of unity 
$\{\chi_j : j=1,\ldots, m\}$ subordinate to the balls $\{B_j : j=1,\ldots, m\}$. Then 
\begin{align}\label{partition}
\phi=\phi\sum_{j=1}^m\chi_j=\sum_{j=1}^m\phi\chi_j\ \text{on}\ b\D.
\end{align}
Let us denote $U_j=B_j \cap \D$. Since $T_{\phi \chi_j}^{U_j}$ is compact on $A^2(U_j)$, 
Corollary \ref{CorMult} implies that $\phi\chi_j =0$ on $bU_j$ for  $j=1,\ldots,m$.  
Thus, by \eqref{partition} we have $\phi (z)=0$ for all $z \in b\D \subset \cup_{j=1}^m bU_j$. 
Finally, Corollary \ref{CorMultToeplitz} implies that $T_{\phi}$ is compact on $A^2(\D)$. 
\end{proof}

\begin{lemma}\label{LemLipschitz}
Let $\D\subset \mathbb{R}^n$ be a finite product of bounded domains with Lipschitz 
boundary. Then $\D$ is a bounded domain with Lipschitz boundary. 
\end{lemma}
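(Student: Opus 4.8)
The plan is to reduce to a product of two factors by induction and then analyze the boundary point by point, the only delicate case being the ``corner'' where both factors sit on their boundaries. Since a finite product can be formed one factor at a time, and since boundedness, openness, and connectedness are all preserved under products, it suffices to prove that whenever $\D_1\subset\mathbb{R}^{n_1}$ and $\D_2\subset\mathbb{R}^{n_2}$ are bounded Lipschitz domains, the product $\D=\D_1\times\D_2\subset\mathbb{R}^{n_1+n_2}$ is a bounded Lipschitz domain. Writing $b\D=(b\D_1\times\overline{\D_2})\cup(\overline{\D_1}\times b\D_2)$, I would classify each boundary point $p=(p_1,p_2)$ according to how many of its factors lie on a factor boundary and verify the local Lipschitz-subgraph condition in each case.

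If exactly one factor is at its boundary, say $p_1\in\D_1$ and $p_2\in b\D_2$, then choosing a small ball $V_1\Subset\D_1$ around $p_1$ and rotating the $\D_2$-coordinates so that $\D_2=\{y_{n_2}<\varphi_2(y')\}$ near $p_2$ with $\varphi_2$ Lipschitz, the product is locally $V_1\times\{y_{n_2}<\varphi_2(y')\}$. In the ambient coordinates this is the subgraph $\{y_{n_2}<\gamma\}$, where $\gamma$ is $\varphi_2$ regarded as a (Lipschitz) function of all remaining coordinates, independent of the $V_1$-variables; so $\D$ is a Lipschitz graph near $p$.

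The main obstacle is the corner case $p_1\in b\D_1$ and $p_2\in b\D_2$. After rotating each factor I may assume $\D_1=\{s<\varphi_1(\tilde x)\}$ and $\D_2=\{t<\varphi_2(\tilde y)\}$ near $p_1$ and $p_2$, where $s,t$ are the two ``normal'' variables and $\varphi_1,\varphi_2$ are Lipschitz. Near $p$ the product is then $\{s<\varphi_1(\tilde x),\ t<\varphi_2(\tilde y)\}$, which is not directly a single graph because of the two competing inequalities. The trick is to rotate by $45^\circ$ in the $(s,t)$-plane alone: setting $u=(s+t)/\sqrt 2$ and $v=(s-t)/\sqrt 2$, an orthogonal change of the full coordinate system fixing $\tilde x$ and $\tilde y$, the two conditions become $u<\sqrt 2\,\varphi_1(\tilde x)-v$ and $u<\sqrt 2\,\varphi_2(\tilde y)+v$, hence
\[
u<F(\tilde x,\tilde y,v):=\min\left\{\sqrt 2\,\varphi_1(\tilde x)-v,\ \sqrt 2\,\varphi_2(\tilde y)+v\right\}.
\]
Since a minimum of finitely many Lipschitz functions is Lipschitz, $F$ is Lipschitz, and in the rotated coordinates (with $u$ as the last coordinate) $\D$ is locally the subgraph $\{u<F\}$, which exhibits $\D$ as Lipschitz near every corner point and shows the domain lies on one side of its boundary there.

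Combining the three cases shows that $\D_1\times\D_2$ is a bounded Lipschitz domain, and the general finite-product statement follows by induction on the number of factors. I expect the corner case to be the crux, in particular the verification that the $45^\circ$ rotation turns the intersection of two subgraph conditions into a single Lipschitz subgraph; the single-boundary case and the bookkeeping in the induction are routine.
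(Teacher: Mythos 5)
Your proof is correct, but it takes a genuinely different route from the paper. The paper invokes the equivalence (from Grisvard) between having Lipschitz boundary and satisfying the uniform cone condition, and then checks that a product of two cones contains a cone of the product space, so the product domain again satisfies the uniform cone condition; the corner behavior never has to be resolved into a single graph. You instead verify the local Lipschitz-subgraph definition directly, and your resolution of the corner case is the interesting step: after the $45^\circ$ rotation in the plane of the two normal directions, the two subgraph conditions $s<\varphi_1(\tilde x)$ and $t<\varphi_2(\tilde y)$ combine into $u<\min\{\sqrt2\,\varphi_1(\tilde x)-v,\ \sqrt2\,\varphi_2(\tilde y)+v\}$, and a minimum of Lipschitz functions is Lipschitz, so the corner is a genuine Lipschitz graph (the model case of two half-spaces gives $u<-|v|$, a quadrant, which is the sanity check that the construction behaves correctly). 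Both arguments are complete; the cone-condition route is shorter once one is willing to cite the equivalence with the uniform cone property, while yours is more self-contained and produces the local graphing functions explicitly, which could be useful if one later needed quantitative control of the Lipschitz constants of $b(\D_1\times\D_2)$ in terms of those of $b\D_1$ and $b\D_2$.
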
 

\begin{proof}
It is enough to prove the case $\D=\D_1\times \D_2$ as the general case 
follows by induction. 

Let $\D_j\subset \mathbb{R}^{n_j}$ be a bounded domain 
with Lipschitz boundary for $j=1,2$ and $\D=\D_1\times\D_2\subset \mathbb{R}^n$. 
We note that a bounded domain has Lipschitz boundary if and only if it  satisfies the 
uniform cone condition (see \cite[Theorem 1.2.2.2]{GrisvardBook}). Since $\D_j$ is 
Lipschitz, $b\D_j$ has a finite cover $\{U_{j,k}\}$ with corresponding cones $\{C_{j,k}\}$ 
such that $z_j+C_{j,k} \subset \D_j$ for $z_j \in \D_j \cap U_{j,k}$. The collection of 
products $\{U_{1,k_1} \times U_{2,k_2}\}$ is a finite cover for $b\D_1\times b\D_2$. 
Furthermore, one can show that each $C_{1,k_1} \times C_{2,k_2}$ 
contains a cone $C_{k_1,k_2}$ in $\mathbb{R}^n$ as follows. 
Let $\ep_j>0$ and $\nu_j\in \mathbb{R}^{n_j}\setminus\{0\}$ such that 
\[C_{j,k_j} \supseteq \{c_ju_j\in \mathbb{R}^{n_j}:\|u_j-\nu_j\|<\ep_j, 0\leq c_j< \ep_j\}\]  
for $j=1,2$. Then we choose $\ep= \min\{\ep_1,\ep_2\}>0$ and 
\[C_{k_1,k_2}=\left\{c(u_1,u_2)\in\mathbb{R}^n:\sum_{j=1}^2\|u_j-\nu_j\|^2
<\ep^2, 0\leq c< \ep \right\}.\] 
Hence, $z+C_{k_1,k_2} \subset \D$ for 
$z\in \D\cap (U_{1,k_1} \times U_{2,k_2})$. That is, $\D$ satisfies the 
uniform cone condition near $b\D_1\times b\D_2$. We note that it is 
easy to see that the boundary of $\D$ away from $b\D_1\times b\D_2$ is Lipschitz. 
Therefore, $\D$ has a Lipschitz boundary.
\end{proof}

Corollary \ref{CorMult} and Lemma \ref{LemLipschitz} imply the following corollary.

\begin{corollary}\label{CorFiniteProd}
Let $\D=\D_1\times \cdots \times \D_m\subset \C^n$ be a finite product of bounded 
pseudoconvex domains with Lipschitz boundary and $\phi\in C(\Dc)$. Then 
$T_{\phi}$ is compact on $A^2(\D)$ if and only if $\phi=0$ on $b\D$.
\end{corollary}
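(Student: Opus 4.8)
The plan is to recognize this statement as a direct application of Corollary \ref{CorMult}, so the entire task reduces to verifying that the product $\D=\D_1\times\cdots\times\D_m$ itself satisfies the hypotheses of that corollary. Concretely, I would check three properties of $\D$: that it is bounded, that it has Lipschitz boundary, and that it is pseudoconvex. Boundedness is immediate since a finite product of bounded sets is bounded, and the Lipschitz regularity of $b\D$ is precisely the content of Lemma \ref{LemLipschitz}. Once $b\D$ is known to be Lipschitz, the negative boundary distance function $d_{\D}$ serves as a Lipschitz defining function for $\D$, which is what Corollary \ref{CorMult} requires.

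The one point deserving a short justification is pseudoconvexity of the product, for which I would invoke (and briefly prove) the classical fact that a finite product of pseudoconvex domains is pseudoconvex. For each $j$ choose a continuous plurisubharmonic exhaustion function $u_j:\D_j\to\mathbb{R}$, which exists exactly because $\D_j$ is pseudoconvex. Viewing each $u_j$ as a function on $\D$ that depends only on the $\C^{n_j}$-block of coordinates, it remains plurisubharmonic, and hence $u(z_1,\ldots,z_m)=\max_{1\leq j\leq m}u_j(z_j)$ is plurisubharmonic as a maximum of finitely many plurisubharmonic functions. Since each $u_j$ exhausts $\D_j$, the function $u$ exhausts $\D$; thus $\D$ admits a continuous plurisubharmonic exhaustion and is pseudoconvex.

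With these facts assembled, $\D$ is a bounded pseudoconvex domain in $\C^n$ with Lipschitz boundary and defining function $\rho=d_{\D}$, so Corollary \ref{CorMult} applies. Taking $r=0$ there, so that $A^2(\D,|\rho|^0)=A^2(\D)$ is the unweighted Bergman space, the equivalence of conditions (ii) and (iii) yields exactly the claim: for $\phi\in C(\Dc)$, the operator $T_{\phi}$ is compact on $A^2(\D)$ if and only if $\phi=0$ on $b\D$. I do not expect any genuine obstacle, since the result is merely a repackaging of Lemma \ref{LemLipschitz} and Corollary \ref{CorMult}; the only step requiring care is the pseudoconvexity of the product, and even that is standard.
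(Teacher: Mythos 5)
Your proposal is correct and follows essentially the same route as the paper, which likewise derives this corollary directly from Lemma \ref{LemLipschitz} (Lipschitz regularity of the product) together with Corollary \ref{CorMult} applied with $r=0$. The only addition is your explicit verification that a finite product of pseudoconvex domains is pseudoconvex, a standard fact the paper leaves implicit, and your argument for it is sound.
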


We note that, with minor modifications, the proof of Theorem \ref{ThmFunc} still works 
on $A^2_{(p,0)}(\D,|\rho|^r)$  for any $0\leq p\leq n$.  Hence,  we have the following 
corollary. 

\begin{corollary}\label{CorP0}
Let $\D$ be a bounded pseudoconvex domain in $\C^n$ with Lipschitz boundary 
and a defining function $\rho$. Assume that $r\geq 0$ is a real number 
and $\phi \in C(\Dc)$. Then the Toeplitz operator $T^{p,0}_{\phi}$ is compact 
on $A^2_{(p,0)}(\D,|\rho|^r)$ if and only if  $\phi=0$ on $b\D$. 
\end{corollary}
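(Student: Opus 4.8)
The plan is to reduce the statement to the scalar case already settled in Corollary \ref{CorMult}. First I would write a $(p,0)$-form as $f=\sum_{|I|=p}f_I\,dz_I$, the sum being over strictly increasing multi-indices $I$ of length $p$, and use that the coframe $\{dz_I\}$ is pointwise orthogonal, so that $\|f\|_{L^2_{(p,0)}(\D,|\rho|^r)}^2=c_p\sum_{|I|=p}\|f_I\|_{L^2(\D,|\rho|^r)}^2$ for a fixed positive constant $c_p$. Consequently $A^2_{(p,0)}(\D,|\rho|^r)$ is the orthogonal direct sum $\bigoplus_{|I|=p}A^2(\D,|\rho|^r)\,dz_I$ of $\binom{n}{p}$ copies of the scalar weighted Bergman space.

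Next I would verify that the Bergman projection $P^{p,0}$ acts diagonally with respect to this decomposition. A form $\sum_I g_I\,dz_I\in L^2_{(p,0)}(\D,|\rho|^r)$ is orthogonal to every holomorphic $(p,0)$-form precisely when each coefficient $g_I$ is orthogonal to every element of $A^2(\D,|\rho|^r)$; hence $P^{p,0}\big(\sum_I g_I\,dz_I\big)=\sum_I (Pg_I)\,dz_I$. Because the symbol $\phi$ is a scalar function, $M_\phi$ also acts coefficientwise, so $T^{p,0}_\phi\big(\sum_I f_I\,dz_I\big)=\sum_I (T_\phi f_I)\,dz_I$. In other words, under the identification above, $T^{p,0}_\phi$ is the $\binom{n}{p}$-fold direct sum $\bigoplus_{|I|=p}T_\phi$ of the scalar Toeplitz operator with itself.

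The proof then finishes with the elementary fact that a finite direct sum of identical copies of a bounded operator $S$ is compact if and only if $S$ is compact: the block-diagonal operator is compact exactly when each diagonal block is, and here every block equals $T_\phi$. Thus $T^{p,0}_\phi$ is compact on $A^2_{(p,0)}(\D,|\rho|^r)$ if and only if $T_\phi$ is compact on $A^2(\D,|\rho|^r)$, and by Corollary \ref{CorMult} the latter is equivalent to $\phi=0$ on $b\D$. There is no genuine obstacle here: all of the analytic content—the weak convergence of the normalized Bergman kernel from Lemma \ref{LemWeakConv} and the localization from Corollary \ref{CorollaryLocal}—is already packaged inside Corollary \ref{CorMult}, and the only point needing care is the coefficientwise action of $P^{p,0}$, which is exactly what turns the form-valued problem into the scalar one. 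Equivalently, one could rerun the proof of Theorem \ref{ThmFunc} verbatim, replacing $|f(z)|^2$ by $\sum_{|I|=p}|f_I(z)|^2$ throughout; this direct-sum reduction is precisely the \emph{minor modification} alluded to above.
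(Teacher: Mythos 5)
Your argument is correct, and it is a genuinely different (and cleaner) route than the one the paper gestures at. The paper offers no written proof: it simply asserts that ``with minor modifications, the proof of Theorem \ref{ThmFunc} still works on $A^2_{(p,0)}(\D,|\rho|^r)$,'' i.e.\ it proposes re-running the kernel argument in the vector-valued setting, which implicitly requires checking that Lemma \ref{LemWeakConv}, Lemma \ref{LemKernel}, and Corollary \ref{CorollaryLocal} all carry over to $(p,0)$-forms (this is the modification you mention at the end, replacing $|f(z)|^2$ by $\sum_{|I|=p}|f_I(z)|^2$ in the extremal characterization \eqref{maximum}). Your primary argument avoids all of that: since a $\dbar$-closed $(p,0)$-form has holomorphic coefficients and the coframe $\{dz_I\}_{|I|=p}$ (strictly increasing indices) is pointwise orthogonal with constant length, $A^2_{(p,0)}(\D,|\rho|^r)$ is the orthogonal direct sum of $\binom{n}{p}$ copies of $A^2(\D,|\rho|^r)$, the projection $P^{p,0}$ acts coefficientwise, and $T^{p,0}_\phi$ is block diagonal with every block equal to the scalar $T_\phi$; compactness of a finite block-diagonal operator is equivalent to compactness of each block (one direction by writing it as a finite sum $\sum_I \iota_I T_\phi \pi_I$, the other by compressing to a single summand), so the corollary follows at once from Corollary \ref{CorMult}. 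The direct-sum reduction buys you a proof with no new analysis and makes transparent exactly why the $(p,0)$ case is no harder than the scalar case; the paper's route, if written out, would instead establish the vector-valued analogues of the kernel lemmas, which is more work but would also be the template for statements that do not reduce to a diagonal operator.
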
 

\section{Proof of Theorem \ref{ThmForms}}\label{SecForms}
One of the well-known tools to show compactness of $H^{p,q}_{\phi}$ is  using 
compactness estimates as in the following lemma (see \cite[Lemma 4.3(ii)]{StraubeBook}). 

\begin{lemma}\label{LemCompEst} 
Let $T: X \rightarrow Y$ be a bounded linear map where $X,Y$ are Hilbert spaces. 
The operator $T$ is compact if and only if for every $\ep >0$ there are a Hilbert 
space  $Z_{\ep}$, a compact linear map $S_{\ep}: X \rightarrow Z_{\ep}$, 
and a constant $C_{\ep}>0$ such that 
\begin{align*}
\|Tf\|_{Y} \leq \ep\|f\|_{X} + C_{\ep}\|S_{\ep}f\|_{Z_{\ep}}
\end{align*}
for all $f\in X$. 
\end{lemma}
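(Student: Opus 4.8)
The plan is to prove the two implications separately, observing that one direction is essentially immediate while the other carries all the content. For the necessity of the estimate (the ``only if'' direction), suppose $T$ is compact. Then for every $\ep>0$ I would simply take $Z_{\ep}=Y$, $S_{\ep}=T$, and $C_{\ep}=1$. Since $T$ is compact, $S_{\ep}$ is a compact linear map, and the inequality
\[
\|Tf\|_{Y}\leq \ep\|f\|_{X}+\|Tf\|_{Y}
\]
holds trivially because $\ep\|f\|_{X}\geq 0$. Thus the required data exist for every $\ep$, and this direction needs no further argument.

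For the sufficiency (the ``if'' direction), I would use the sequential characterization of compactness on Hilbert spaces: a bounded operator $T\colon X\to Y$ is compact if and only if it sends every weakly convergent sequence to a norm-convergent one. By linearity it suffices to show that $f_k\rightharpoonup 0$ weakly implies $Tf_k\to 0$ in norm. So let $f_k\rightharpoonup 0$. By the uniform boundedness principle the sequence $\{f_k\}$ is bounded, say $\|f_k\|_{X}\leq M$ for all $k$. Fix $\delta>0$ and apply the hypothesis with $\ep=\delta$ to obtain a Hilbert space $Z_{\delta}$, a compact map $S_{\delta}\colon X\to Z_{\delta}$, and a constant $C_{\delta}>0$ with
\[
\|Tf_k\|_{Y}\leq \delta\|f_k\|_{X}+C_{\delta}\|S_{\delta}f_k\|_{Z_{\delta}}
\leq \delta M+C_{\delta}\|S_{\delta}f_k\|_{Z_{\delta}}.
\]
Because $S_{\delta}$ is compact and $f_k\rightharpoonup 0$, the images satisfy $S_{\delta}f_k\to 0$ in norm. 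Taking $\limsup$ over $k$ in the displayed inequality then gives $\limsup_{k}\|Tf_k\|_{Y}\leq \delta M$. Since $\delta>0$ is arbitrary, $\limsup_{k}\|Tf_k\|_{Y}=0$, that is, $Tf_k\to 0$ in $Y$. To conclude that $T$ is compact, given any bounded sequence $\{g_k\}\subset X$, reflexivity of the Hilbert space $X$ yields a weakly convergent subsequence $g_{k_j}\rightharpoonup g$, and the property just established shows $Tg_{k_j}\to Tg$ in norm; hence $\{Tg_k\}$ has a norm-convergent subsequence and $T$ is compact.

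The only substantive ingredients are three standard facts about Hilbert spaces: that weakly convergent sequences are bounded, that bounded sequences admit weakly convergent subsequences (reflexivity), and that compact operators carry weak convergence to norm convergence. The main, and rather mild, obstacle is simply organizing the $\limsup$ argument so that the scaling is uniform in $k$: one must fix the bound $M$ on $\|f_k\|_X$ \emph{before} choosing the compactness-estimate parameter $\delta$, so that the first term $\delta M$ can be made arbitrarily small independently of the (compactness-driven) decay of the second term $C_{\delta}\|S_{\delta}f_k\|_{Z_{\delta}}$, whose constant $C_{\delta}$ is allowed to depend on $\delta$.
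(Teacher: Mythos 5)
Your proof is correct. The paper does not actually prove this lemma---it is quoted from Straube's book \cite[Lemma 4.3(ii)]{StraubeBook} without proof---so there is no in-paper argument to compare against, but your argument is the standard one found in that reference: the ``only if'' direction is trivial with $Z_{\ep}=Y$, $S_{\ep}=T$, and the ``if'' direction combines the three Hilbert-space facts you isolate (weakly convergent sequences are bounded, compact operators are completely continuous, bounded sequences have weakly convergent subsequences) with the $\limsup$ argument in which $M$ is fixed before $\delta$ is chosen. No gaps.
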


Let $\phi \in C^1(\Dc)$ and  $0\leq p,q\leq n$ be integers. Then Kohn's formula, 
$P^{p,q}=I-\dbar^{\ast}N_{p,q+1}\dbar$, implies that 
\begin{align}\label{Kohnsformulaimplies}
H_{\phi}^{p,q}f=\dbar^{\ast}N_{p,q+1}(\dbar \phi \wedge f),
\end{align}
where $N_{p,q+1}$ is the $\dbar$-Neumann operator on $L^2_{(p,q+1)}(\D)$, the square 
integrable $(p,q+1)$-forms (see \cite{ChenShawBook}).   

We introduce some lemmas before we prove Theorem \ref{ThmForms}. First, we 
generalize Lemma \ref{LemToeplitz} to $(p,q)$-forms. 

\begin{lemma}\label{LemToeplitzForms}	
Let $\D$ be a bounded domain in $\C^n$, $\phi \in L^{\infty}(\D), 0\leq p,q\leq n$, 
and $\psi \in C(\Dc)$. Assume that $T^{p,q}_{\phi}$ is compact 
on $K^2_{(p,q)}(\D)$. Then $T^{p,q}_{\phi\psi}$ is compact on $K^2_{(p,q)}(\D)$. 
\end{lemma}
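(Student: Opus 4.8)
The plan is to mimic the proof of Lemma \ref{LemToeplitz}, adapting each step from functions to $(p,q)$-forms. The essential mechanism is the same: I would use the algebraic identity relating products of Toeplitz operators to Hankel operators, the boundedness of Toeplitz operators with polynomial symbols (in $z$ and $\overline z$), and the Stone--Weierstrass theorem to approximate the continuous symbol $\psi$ uniformly on $\Dc$ by polynomials in $z$ and $\zb$. The key observation that makes everything go through is that multiplication of a $\dbar$-closed $(p,q)$-form by a holomorphic polynomial $z^L$ (or an antiholomorphic polynomial $\zb^K$) behaves exactly as it does for functions with respect to the operators $T^{p,q}$ and $H^{p,q}$.

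First I would record the form-valued analogue of \eqref{Lem1Eqn1}, namely $T^{p,q}_{f_1}T^{p,q}_{f_2}=T^{p,q}_{f_1f_2}-(H^{p,q}_{\overline{f}_1})^{\ast}H^{p,q}_{f_2}$, which follows from \eqref{multiplication} and the definition of $P^{p,q}$ exactly as in the scalar case. Next I would observe that $T^{p,q}_{z^L}$ and $T^{p,q}_{\zb^K}$ are bounded for all $L,K\in\mathbb{N}_0^n$, and that $H^{p,q}_{z^K}=0$: multiplying a $\dbar$-closed $(p,q)$-form by a holomorphic polynomial $z^K$ keeps it $\dbar$-closed and in $L^2$ (as $\D$ is bounded), so $z^K f\in K^2_{(p,q)}(\D)$ and $P^{p,q}(z^K f)=z^K f$. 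With these facts in hand, for $g\in K^2_{(p,q)}(\D)$ one computes
\[
T^{p,q}_{\zb^K}T^{p,q}_{\phi}T^{p,q}_{z^L}g=T^{p,q}_{\zb^K}T^{p,q}_{\phi z^L}g=T^{p,q}_{\zb^K\phi z^L}g,
\]
using $T^{p,q}_{\phi}T^{p,q}_{z^L}g=P^{p,q}(\phi z^L g)=T^{p,q}_{\phi z^L}g$ and then the identity above together with $H^{p,q}_{z^K}=0$. Since $T^{p,q}_{\phi}$ is compact and $T^{p,q}_{\zb^K},T^{p,q}_{z^L}$ are bounded, each $T^{p,q}_{\zb^K\phi z^L}$ is compact, and hence $T^{p,q}_{\phi P(z,\zb)}$ is compact for every polynomial $P$ in $z$ and $\zb$.

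To finish, I would invoke Stone--Weierstrass to pick polynomials $P_n(z,\zb)\to\psi$ uniformly on $\Dc$; then $\|T^{p,q}_{\phi P_n}-T^{p,q}_{\phi\psi}\|\leq\|\phi\|_{L^\infty(\D)}\,\|P_n-\psi\|_{L^\infty(\Dc)}\to 0$, so $T^{p,q}_{\phi\psi}$ is compact as an operator-norm limit of compact operators. The main obstacle I anticipate is not conceptual but a matter of verifying that the scalar-case manipulations survive the passage to forms: specifically, confirming that multiplication by $z^K$ (respectively $\zb^K$) interacts with the projection $P^{p,q}$ so that $H^{p,q}_{z^K}=0$ and the product formula holds componentwise. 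This is where boundedness of $\D$ (ensuring $z^K f$ stays in $L^2_{(p,q)}(\D)$) and the fact that multiplication by a holomorphic polynomial preserves $\dbar$-closedness are both used; once these are checked, the argument is identical in structure to Lemma \ref{LemToeplitz}.
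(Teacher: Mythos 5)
Your proposal is correct and follows essentially the same route as the paper's own proof: the form-valued Toeplitz--Hankel identity, the observation that $H^{p,q}_{z^K}=0$ because multiplication by a holomorphic polynomial preserves $\dbar$-closedness, the resulting compactness of each $T^{p,q}_{\zb^K\phi z^L}$, and the Stone--Weierstrass approximation in operator norm. The one point you flag as needing verification (that $z^K f\in K^2_{(p,q)}(\D)$ so $P^{p,q}(z^Kf)=z^Kf$) is exactly the point the paper relies on as well, so there is no gap.
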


\begin{proof}
We recall the connection between Toeplitz and Hankel operators,
\begin{align} \label{Eq1Forms}
T^{p,q}_{f_1}T^{p,q}_{f_2}=T^{p,q}_{f_1f_2}-H^{p,q*}_{\overline{f}_1}H^{p,q}_{f_2}
\end{align}
where $H^{p,q}_{\overline{f}_1}$ and $H^{p,q}_{f_{2}}$ are Hankel operators with symbols 
$\overline{f}_1$ and $f_{2}$, respectively. Then we use computations similar to the ones 
in the proof of Lemma \ref{LemToeplitz} to get 
\[T^{p,q}_{\phi}T^{p,q}_{z^L}g=T^{p,q}_{\phi}P^{p,q}(z^Lg)
=T^{p,q}_{\phi}(z^Lg)=P^{p,q}(\phi z^Lg)=T^{p,q}_{\phi z^L}g\] 
for $g\in K^2_{(p,q)}(\D)$. 
From \eqref{Eq1Forms}, we have, 
\begin{align*}
T^{p,q}_{\zb^K}T^{p,q}_{\phi}T^{p,q}_{z^L}g 
= T^{p,q}_{\zb^K\phi z^L}g - H^{p,q*}_{z^K}H^{p,q}_{\phi z^L} g 
= T^{p,q}_{\zb^K\phi z^L}g.
\end{align*}   
Then $T^{p,q}_{\zb^K\phi z^L}$ is a compact operator for all $K,L\in \mathbb{N}_{0}^{n}$. 
Finally, just like in the proof of Lemma \ref{LemToeplitz}, we use an argument 
involving the Stone-Weierstrass Theorem to complete the proof.  
\end{proof}

We note that, if $\phi \in C(\Dc)$ such that $\phi=0$ on $b\D$ where $\D$ is a 
bounded convex domain in $\C^n$, the Hankel operator $H^{p,q}_{\phi}$ is compact 
on $K_{(p,q)}^2(\D)$ (see \cite[Lemma 2]{CelikSahutogluStraube20}). The next lemma 
shows that this holds for bounded pseudoconvex domains as well. 

\begin{lemma}\label{compactnesshankel}
Let $\D$ be a bounded pseudoconvex domain in $\C^n$ and $0\leq p,q\leq n$. 
Assume that $\phi \in C(\Dc)$ and that $\phi = 0$ on $b\D$. Then $H^{p,q}_{\phi}$ 
is compact on $K_{(p,q)}^2(\D)$.
\end{lemma}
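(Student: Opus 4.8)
The plan is to reduce the statement to a density-type approximation together with the compactness estimate from Lemma~\ref{LemCompEst}. First I would observe that $\phi=0$ on $b\D$ means that for every $\ep>0$ there is a compact set $K\Subset \D$ with $|\phi|<\ep$ on $\D\setminus K$. The multiplication-operator splitting \eqref{multiplication}, namely $M^{p,q}_{\phi}=T^{p,q}_{\phi}+H^{p,q}_{\phi}$, shows that $H^{p,q}_{\phi}$ is compact if and only if $M^{p,q}_{\phi}$ is, so it suffices to prove that $M^{p,q}_{\phi}$ is compact on $K^2_{(p,q)}(\D)$. This is the approach I would take because multiplication is simpler to estimate than the Hankel operator directly.

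The key steps, in order, would be as follows. First I would approximate $\phi$ in the sup-norm on $\Dc$ by functions that are compactly supported in $\D$: since $\phi\in C(\Dc)$ vanishes on $b\D$, one can find $\phi_{\ep}\in C_0(\D)$ with $\|\phi-\phi_{\ep}\|_{L^{\infty}(\D)}<\ep$ (for instance by cutting off $\phi$ outside a compact neighborhood of $K$). Since $\|M^{p,q}_{\phi}-M^{p,q}_{\phi_{\ep}}\|\leq \|\phi-\phi_{\ep}\|_{L^{\infty}(\D)}<\ep$ as operators, and compactness is preserved under operator-norm limits, it is enough to show each $M^{p,q}_{\phi_{\ep}}$ with $\phi_{\ep}\in C_0(\D)$ produces a compact multiplication operator. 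For such a compactly supported symbol I would argue that multiplication by $\phi_{\ep}$ is compact from $K^2_{(p,q)}(\D)$ into $L^2_{(p,q)}(\D)$ via a normal-families or Rellich-type argument: a bounded sequence of $\dbar$-closed forms is bounded in $L^2$, hence precompact in $L^2_{\mathrm{loc}}$ on the support of $\phi_{\ep}$, which forces $\phi_{\ep}f_k$ to have an $L^2$-convergent subsequence. Alternatively, I would run the same estimate used in Lemma~\ref{LemCompact}: writing $\|M^{p,q}_{\phi}f\|^2\leq \ep\|f\|^2+\|M^{p,q}_{\phi\chi_K}f\|^2$ and invoking the compactness of multiplication by a compactly supported cutoff, then checking the hypothesis of Lemma~\ref{LemCompEst}.

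The main obstacle I expect is the step asserting that multiplication by a compactly supported continuous function is compact on $\dbar$-closed $(p,q)$-forms. On holomorphic functions this is Montel's theorem, but $K^2_{(p,q)}(\D)$ for $q\geq 1$ is not a space of holomorphic objects, so Montel does not apply directly; the correct replacement is interior elliptic regularity for the $\dbar$-complex (or the fact that $\dbar$-closed $L^2$ forms satisfy interior Cauchy-type estimates on the coefficients that are harmonic/pluriharmonic up to the $\dbar$-data). Concretely, I would invoke that a $\dbar$-closed $(p,q)$-form is, componentwise, a solution of an elliptic system modulo lower-order terms, so that an $L^2$-bounded family is precompact in $L^2_{\mathrm{loc}}$ by a Rellich compactness argument on the compact set $\supp(\phi_{\ep})\Subset\D$. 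Establishing this interior compactness carefully---and confirming it holds uniformly over the whole bounded family so that the compactness-estimate criterion of Lemma~\ref{LemCompEst} is satisfied---is the technical heart of the proof; everything else is a routine cutoff-and-approximation argument.
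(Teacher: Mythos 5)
Your reduction to compactness of the multiplication operator is where the argument breaks down, and the failure is not a technical obstacle that more careful elliptic theory can repair: for $q\geq 1$ the operator $M^{p,q}_{\phi_\ep}$ with $\phi_\ep\in C_0(\D)$, $\phi_\ep\not\equiv 0$, is genuinely \emph{not} compact on $K^2_{(p,q)}(\D)$. The point is that $\dbar$ is not an elliptic operator on $(p,q)$-forms when $q\geq 1$, and $\dbar$-closed $L^2$ forms enjoy no interior regularity whatsoever: for any $f\in L^2(\disk)$ extended by zero, the form $f(z_1)\,dz_1\wedge\cdots\wedge dz_p\wedge d\zb_1\wedge\cdots\wedge d\zb_q$ is $\dbar$-closed with a completely arbitrary $L^2$ coefficient. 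This is exactly the family of forms used in the paper's proof of Theorem \ref{ThmForms} to show that $M^{p,q}_{\phi\psi}$ is \emph{not} compact for a compactly supported cutoff $\psi$ whenever $\phi\not\equiv 0$. So the Rellich/normal-families step that you correctly identify as the technical heart is false, and the statement you reduce to is false. (Your reduction is also only one-directional: compactness of $M^{p,q}_\phi$ does imply compactness of $H^{p,q}_\phi=(I-P^{p,q})M^{p,q}_\phi$, but the converse fails --- indeed the content of the lemma together with Theorem \ref{ThmForms} is precisely that $H^{p,q}_\phi$ can be compact while $M^{p,q}_\phi$ is not.)

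The paper's proof necessarily works with the Hankel operator directly rather than with $M^{p,q}_\phi$. It uses Kohn's formula \eqref{Kohnsformulaimplies}, $H^{p,q}_\chi f=\dbar^{\ast}N_{p,q+1}(\dbar\chi\wedge f)$ for $\chi\in C_0^\infty(\D)$, which gives $\|H^{p,q}_\chi f\|^2=\langle N_{p,q+1}(\dbar\chi\wedge f),\dbar\chi\wedge f\rangle$; the gain comes from the fact that $|\nabla\chi|$ is a compactly supported continuous function and hence a compactness multiplier for the $\dbar$-Neumann problem, which produces a compactness estimate in the sense of Lemma \ref{LemCompEst} with the compact operator $f\mapsto N_{p,q+1}(\dbar\chi\wedge f)$ measured in the Sobolev $(-1)$-norm. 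The general symbol is then handled, much as you propose in your outer layer, by uniform approximation by $C_0^\infty(\D)$ functions and operator-norm convergence of the Hankel (not multiplication) operators. Your cutoff-and-approximation framework is sound; the inner compactness claim must be replaced by this $\dbar$-Neumann argument.
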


\begin{proof}
We follow the proof of \cite[Lemma 2]{CelikSahutogluStraube20}. We note that 
$H^{p,n}_{\phi}=0$ as $(p,n)$-forms are $\dbar$-closed. So for the rest of the proof, 
we will assume that $0\leq q\leq n-1$. 

First we assume that $\chi \in C_{0}^\infty(\D)$. Let 
\[f = \sumprime_{|J|=p,|K|=q}f_{JK}dz_J\wedge d\zb_K \in K_{(p,q)}^2(\D)\]
where  $\sumprime$ denotes the sum with strictly increasing indices.  
 By \cite[Theorem 4.4.1]{ChenShawBook} we observe that
\begin{align*}
\dbar\dbar^{\ast} N_{p,q+1}(\dbar \chi \wedge f)=\dbar \chi \wedge f.
\end{align*}
By the preceding equation and \eqref{Kohnsformulaimplies}, we have 
\begin{align} \nonumber
\|H_{\chi}^{p,q}f\|_{L_{(p,q)}^2(\D)}^2 =& \langle  \dbar^{\ast}N_{p,q+1}(\dbar \chi \wedge f), 
\dbar^{\ast}N_{p,q+1}(\dbar \chi \wedge f) \rangle_{L_{(p,q)}^2(\D)} \\
 \label{combine1}=& \langle  N_{p,q+1}(\dbar \chi \wedge f), 
\dbar\dbar^{\ast} N_{p,q+1}(\dbar \chi \wedge f) \rangle_{L_{(p,q+1)}^2(\D)} \\
\nonumber=& \langle  N_{p,q+1}(\dbar \chi \wedge f), \dbar \chi \wedge f \rangle_{L_{(p,q+1)}^2(\D)}.
\end{align}
Now, 
\begin{align}  \label{combine2}
\left|\langle N_{p,q+1}(\dbar \chi \wedge f), \dbar \chi \wedge f \rangle_{L_{(p,q+1)}^2(\D)} \right| 
 \leq \left\| |\nabla \chi| N_{p,q+1}(\dbar \chi \wedge f)\right\|_{L_{(p,q+1)}^2(\D)} 
\|f\|_{L_{(p,q)}^2(\D)}.
\end{align}
Next, we use the inequality $|ab| \leq \dfrac{a^2}{\ep} + \ep b^2$ for real numbers 
$a,b$ and $\ep > 0$  
\begin{align}
\left\| |\nabla \chi| N_{p,q+1}(\dbar \chi \wedge f)\right\|_{L_{(p,q+1)}^2(\D)} 
\|f\|_{L_{(p,q)}^2(\D)} \leq &\dfrac{1}{\ep}\left\| |\nabla \chi| 
N_{p,q+1}(\dbar \chi \wedge f)\right\|_{L_{(p,q+1)}^2(\D)}^2 \\
\nonumber &+ \ep \|f\|_{L_{(p,q)}^2(\D)}^2.
\end{align} 
Now, $|\nabla \chi|$ is a compactly supported continuous function on $\Dc$.  
Thus, it is a compactness multiplier (see \cite{CelikStraube09}). Then, there is a 
constant $C_{\ep}>0$ such that
\begin{align*}
\left\||\nabla \chi| N_{p,q+1}(\dbar \chi \wedge f)\right\|_{L_{(p,q+1)}^2(\D)}^2 
\leq& \ep^2\left\|\dbar N_{p,q+1}(\dbar \chi \wedge f) \right\|_{L_{(p,q+2)}^2(\D)}^2 \\
	&+ \ep^2\left\|\dbar^{\ast} N_{p,q+1}(\dbar \chi \wedge f) \right\|_{L_{(p,q)}^2(\D)}^2\\
& + C_{\ep}\left\|N_{p,q+1}(\dbar \chi \wedge f)\right\|_{-1}^2 
\end{align*}
where $\| \cdot \|_{-1}$ denotes the Sobolev $(-1)$-norm. We observe that 
\begin{itemize}
\item[i.] $\dbar N_{p,q+1}(\dbar \chi \wedge f) = N_{p,q+2}\dbar(\dbar \chi \wedge f) =0$; 
\item[ii.] $\dbar^{\ast} N_{p,q+1}$ is a bounded map;  
\item[iii.] the map $S_{p,q}: K_{(p,q)}^2(\D) \rightarrow W_{(p,q+1)}^{-1}(\D)$ defined by 
$S_{p,q}f= N_{p,q+1}(\dbar \chi \wedge f)$ is compact. 
\end{itemize}
Thus, there exists $C>0$ such that 
\begin{align}\label{combine3} 
\dfrac{1}{\ep}\left\|\left(|\nabla \chi| \right) 
N_{p,q+1}(\dbar \chi \wedge f)\right\|_{L_{(p,q+1)}^2(\D)}^2 
\leq& C \ep\|f\|^2_{L_{(p,q)}^2(\D)} + \frac{C_{\ep}}{\ep}\|S_{p,q}f\|_{-1}^2 
\end{align}
Combining \eqref{combine1}-\eqref{combine3}, for $f \in K_{(p,q)}^2(\D)$ we have 
\[\left\|H_{\chi}^{p,q}f\right\|_{L_{(p,q)}^2(\D)}^2 \leq (C+1)\ep\|f\|^2_{L_{(p,q)}^2(\D)} 
+ \frac{C_{\ep}}{\ep}\left\|S_{p,q}f\right\|_{-1}^2.\] 
By compactness estimates in Lemma \ref{LemCompEst},  we conclude that $H_{\chi}^{p,q}$ 
is compact for any $\chi\in  C_{0}^{\infty}(\D)$.

We finish the proof as follows. The symbol $\phi$ can be approximated uniformly 
on $\Dc$ by a sequence $\{\chi_j\} \subset C_{0}^{\infty}(\D)$ because $\phi\in C(\Dc)$ 
and $\phi=0$ on $b\D$. As shown above, $H_{\chi_j}^{p,q}$ is compact for all $j$. Then 
$H_{\phi}^{p,q}$ is the limit of $\{H_{\chi_j}^{p,q}\}$ in the operator norm.
Since compactness is preserved under convergence in operator norm, 
$H_{\phi}^{p,q}$ is compact on $K_{(p,q)}^2(\D)$.
\end{proof}

Now, we prove Theorem \ref{ThmForms}.

\begin{proof}[Proof of Theorem \ref{ThmForms}]
Since $T_{\phi}^{p,q}=0$ whenever $\phi=0$ we just need to prove 
the converse. So we assume that $\phi\not\equiv 0$. Using dilation and translation 
if necessary, we assume that $\overline{\mathbb{D}^n}\subset \D$ and 
$\delta_1\leq |\phi|^2\leq \delta_2$ on  $\overline{\mathbb{D}^n}$ for some 
$0<\delta_1\leq \delta_2<\infty$.  Let us denote $\phi_0(\xi)=\phi(\xi,0,\ldots,0)$ 
for $\xi\in \mathbb{D}$. 

Next we will use the following fact:  the multiplication 
operator $M_{\phi_0}$ is compact on $L^2(\mathbb{D})$ if and 
only if $\phi_0= 0$ on $\mathbb{D}$ 
(see \cite[Corollary 1.1]{SinghKumar1979}). Then the fact that 
$\phi_0\neq 0$ on $\mathbb{D}$ implies that there exists 
a sequence $\{f_j\}\subset L^2(\mathbb{D})$ and $\ep>0$ such that 
$\|f_j\|_{L^2(\disk)}=1$ for all $j$ and  $\|\phi_0(f_j-f_k) \|_{L^2(\disk)}^2>\ep $ 
for $j\neq k$. Now we extend each $f_j$ trivially as zero outside $\mathbb{D}$ and define 
\[ F_j(z)=f_j(z_1) dz_1\wedge \cdots \wedge dz_p\wedge d\zb_1\wedge \cdots \wedge d\zb_q.\]
Hence $\{F_j\}$ is a bounded sequence in $K^2_{(p,q)}(\D)$.  

Let $\psi\in C^{\infty}_0(\D)$ such that $\psi=1$ on  $\disk^n$. Then for $j\neq k$ we have 
\begin{align*}
\ep< \|\phi_0(f_j-f_k) \|_{L^2(\disk)}^2 
\leq&   \dfrac{\delta_2}{\delta_1\pi^{n-1}} 
\int_{\disk^n}|\phi(z)\psi(z)|^2|f_j(z_1)-f_k(z_1)|^2dV(z) \\
\leq& \dfrac{\delta_2}{\delta_1\pi^{n-1}} \| M_{\phi\psi}(F_j- F_k)\|^2_{L_{(p,q)}^2(\D)} 
\end{align*}
where $M_{\phi\psi}:L_{(p,q)}^2(\D)\to L_{(p,q)}^2(\D)$ is the multiplication operator 
by $\phi\psi$. Hence  $M_{\phi\psi}$ is not compact on $K_{(p,q)}^2(\D)$ as 
$\{M_{\phi\psi}F_j\} $ has no convergent  subsequence.

Since $\phi\psi\in C(\Dc)$ such that $\phi\psi =0$ on $b\D$,  Lemma \ref{compactnesshankel} 
shows that $H_{\phi\psi}^{p,q}$ is compact on $K_{(p,q)}^2(\D)$. Then by \eqref{multiplication}, 
the operator $T_{\phi\psi}^{p,q}$ is not compact on $K_{(p,q)}^2(\D)$. Therefore,  
we conclude that $T_{\phi}^{p,q}$ is not compact. 
\end{proof}

\section*{Acknowledgment}

We would like to thank Timothy Clos for reading an earlier manuscript of this paper  
and Trieu Le for discussions that improved some of the proofs.  We thank Phillip 
Harrington for pointing out \cite[Proposition 3.1]{Harrington2022} and the 
referee for drawing our attention to \cite[Corollary 1.1]{SinghKumar1979} 
which shortened the proof of Theorem \ref{ThmForms}. Finally, we thank the 
editor Harold Boas for helpful feedback.


\begin{thebibliography}{MSW13}

\bibitem[AE01]{ArazyEnglis01}
Jonathan Arazy and Miroslav Engli{\v{s}}, \emph{Iterates and the boundary
	behavior of the {Berezin} transform}, Ann. Inst. Fourier \textbf{51} (2001),
no.~4, 1101--1133.

\bibitem[AZ98]{AxlerZheng98}
Sheldon Axler and Dechao Zheng, \emph{{Compact operators via the Berezin
		transform}}, {Indiana Univ. Math. J.} \textbf{47} (1998), no.~2, 387--400.

\bibitem[CS01]{ChenShawBook}
So-Chin Chen and Mei-Chi Shaw, \emph{{Partial differential equations in several
		complex variables}}, vol.~19, Providence, RI: American Mathematical Society
(AMS); Somerville, MA: International Press, 2001.

\bibitem[CKL09]{ChoeKooLee09}
Boo~Rim Choe, Hyungwoon Koo, and Young~Joo Lee, \emph{Finite sums of {T}oeplitz
  products on the polydisk}, Potential Anal. \textbf{31} (2009), no.~3,
  227--255.

\bibitem[{\c{C}}S09]{CelikStraube09}
Mehmet {\c{C}}elik and Emil~J. Straube, \emph{Observations regarding
	compactness in the {$\overline{\partial}$}-{N}eumann problem}, Complex Var.
Elliptic Equ. \textbf{54} (2009), no.~3-4, 173--186.

\bibitem[{\c{C}}{\c{S}}S20]{CelikSahutogluStraube20}
Mehmet {\c{C}}elik, S\"onmez {\c{S}}ahuto\u{g}lu, and Emil~J. Straube,
\emph{Compactness of {H}ankel operators with continuous symbols on convex
	domains}, Houston J. Math. \textbf{46} (2020), no.~4, 1005--1016.

\bibitem[{\v{C}}{\c{S}}13]{CuckovicSahutoglu13}
{\v{Z}}eljko {\v{C}}u\v{c}kovi\'c and S\"onmez {\c{S}}ahuto\u{g}lu,
\emph{Axler-{Z}heng type theorem on a class of domains in $\mathbb{C}^n$},
{Integral Equations Oper. Theory} \textbf{77} (2013), no.~3, 397--405.

\bibitem[{\v{C}}{\c{S}}14]{CuckovicSahutoglu14Erratum}
\bysame, \emph{Erratum to: {A}xler-{Z}heng type theorem on a class of domains
	in {$\mathbb{C}^n$}}, Integral Equations Operator Theory \textbf{79} (2014),
no.~3, 449--450.

\bibitem[{\v{C}}{\c{S}}18]{CuckovicSahutolgu18}
\bysame, \emph{Essential norm estimates for the
	\(\overline{\partial}\)-{N}eumann operator on convex domains and worm
	domains}, {Indiana Univ. Math. J.} \textbf{67} (2018), no.~1, 267--292.

\bibitem[{\v{C}}{\c{S}}21]{CuckovicSahutoglu21}
\bysame, \emph{Berezin regularity of domains in {$\mathbb{C}^n$} and the
	essential norms of {T}oeplitz operators}, Trans. Amer. Math. Soc.
\textbf{374} (2021), no.~4, 2521--2540.

\bibitem[{\v{C}}{\c{S}}Z18]{CucukovicSahutogluZeytunuc18}
{\v{Z}}eljko {\v{C}}u\v{c}kovi\'c, S\"onmez {\c{S}}ahuto\u{g}lu, and Yunus~E.
Zeytuncu, \emph{A local weighted {A}xler-{Z}heng theorem in $\mathbb{C}^n$},
{Pac. J. Math.} \textbf{294} (2018), no.~1, 89--106.

\bibitem[Dem87]{Demailly87}
Jean-Pierre Demailly, \emph{Mesures de {Monge}-{Amp{\`e}re} et mesures
	pluriharmoniques. ({Monge}-{Amp{\`e}re} measures and pluriharmonic
	measures)}, Math. Z. \textbf{194} (1987), 519--564.

\bibitem[Eng99]{Englis99}
Miroslav Engli{\v{s}}, \emph{Compact {T}oeplitz operators via the {B}erezin
	transform on bounded symmetric domains}, Integral Equations Operator Theory
\textbf{33} (1999), no.~4, 426--455.

\bibitem[FR74]{ForelliRudin1974}
Frank Forelli and Walter Rudin, \emph{{Projections on spaces of holomorphic
		functions in balls}}, {Indiana Univ. Math. J.} \textbf{24} (1974), 593--602.

\bibitem[Gri85]{GrisvardBook}
P.~Grisvard, \emph{Elliptic problems in nonsmooth domains}, Monographs and
Studies in Mathematics, vol.~24, Pitman (Advanced Publishing Program),
Boston, MA, 1985.

\bibitem[Har08]{Harrington2008}
Phillip~S. Harrington, \emph{{The order of plurisubharmonicity on pseudoconvex
		domains with Lipschitz boundaries}}, {Math. Res. Lett.} \textbf{15} (2008),
no.~2-3, 485--490.

\bibitem[Har22]{Harrington2022}
\bysame, \emph{On competing definitions for the {D}iederich-{F}orn\ae{}ss index}, 
Math. Z. \textbf{302} (2022), no.~3, 1657--1678.

\bibitem[Her99]{Herbort99}
Gregor Herbort, \emph{The {Bergman} metric on hyperconvex domains}, Math. Z.
\textbf{232} (1999), no.~1, 183--196.

\bibitem[H{\"o}r65]{Hormander65}
L.~H{\"o}rmander, \emph{{\(L^ 2\) estimates and existence theorems for the
		\(\overline{\partial}\)-operator}}, {Acta Math.} \textbf{113} (1965),
89--152.

\bibitem[JP13]{JarnickiPflugBook2ndEd}
Marek Jarnicki and Peter Pflug, \emph{Invariant distances and metrics in
	complex analysis}, extended ed., De Gruyter Expositions in Mathematics,
vol.~9, Walter de Gruyter GmbH \& Co. KG, Berlin, 2013.

\bibitem[Le10]{Le10Toeplitz}
Trieu Le, \emph{{On Toeplitz operators on Bergman spaces of the unit
		polydisk}}, {Proc. Am. Math. Soc.} \textbf{138} (2010), no.~1, 275--285.

\bibitem[Lig89]{Ligocka89}
Ewa Ligocka, \emph{On the {Forelli}-{Rudin} construction and weighted {Bergman}
	projections}, Stud. Math. \textbf{94} (1989), no.~3, 257--272.

\bibitem[MSW13]{MitkovskiSuarezWick13}
Mishko Mitkovski, Daniel Su{\'a}rez, and Brett~D. Wick, \emph{The essential
	norm of operators on {$A^p_\alpha(\mathbb{B}_n)$}}, Integral Equations
Operator Theory \textbf{75} (2013), no.~2, 197--233.

\bibitem[Ohs84]{Ohsawa84}
Takeo Ohsawa, \emph{{Boundary behavior of the Bergman kernel function on
		pseudoconvex domains}}, {Publ. Res. Inst. Math. Sci.} \textbf{20} (1984),
897--902.

\bibitem[PW90]{Pasternak-Winiarski90}
Zbigniew Pasternak-Winiarski, \emph{On the dependence of the reproducing kernel
	on the weight of integration}, J. Funct. Anal. \textbf{94} (1990), no.~1,
110--134.

\bibitem[Rod24]{RodriguezThesis}
Tomas~Miguel Rodriguez, \emph{Compactness of {T}oeplitz operators with
  continuous symbols on pseudoconvex domains in {$\mathbb{C}^n$}}, Ph.D. thesis,
  University of Toledo, Toledo, OH, 2024.

\bibitem[Sha05]{Shaw05}
Mei-Chi Shaw, \emph{Boundary value problems on {L}ipschitz domains in
	{$\mathbb{R}^n$} or {$\mathbb{C}^n$}}, Geometric analysis of {PDE} and
several complex variables, Contemp. Math., vol. 368, Amer. Math. Soc.,
Providence, RI, 2005, pp.~375--404.

\bibitem[SK79]{SinghKumar1979}
R.~K. Singh and Ashok Kumar, \emph{Compact composition operators}, J. Austral.
  Math. Soc. Ser. A \textbf{28} (1979), no.~3, 309--314.

\bibitem[Str10]{StraubeBook}
Emil~J. Straube, \emph{Lectures on the $\mathscr{L}^2$-{S}obolev theory of the
	{$\overline{\partial}$}-{N}eumann problem}, ESI Lectures in Mathematics and
Physics, European Mathematical Society (EMS), Z\"{u}rich, 2010.

\bibitem[Su{\'a}07]{Suarez07}
Daniel Su{\'a}rez, \emph{The essential norm of operators in the {T}oeplitz
	algebra on {$A^p(\mathbb{B}_n)$}}, Indiana Univ. Math. J. \textbf{56} (2007),
no.~5, 2185--2232.

\bibitem[Tik15]{Tikaradze15}
Akaki Tikaradze, \emph{Multiplication operators on the {B}ergman spaces of
	pseudoconvex domains}, New York J. Math. \textbf{21} (2015), 1327--1345.

\bibitem[Wie84]{Wiegerinck84}
Jan Wiegerinck, \emph{{Domains with finite dimensional Bergman space}}, {Math.
	Z.} \textbf{187} (1984), 559--562.

\bibitem[Zhu07]{ZhuBook}
Kehe Zhu, \emph{Operator theory in function spaces}, second ed., Mathematical
  Surveys and Monographs, vol. 138, American Mathematical Society, Providence,
  RI, 2007.

\end{thebibliography}
\end{document}